\newtheorem{definition}{Definition}
\newtheorem{theorem}{Theorem}
\newtheorem{corollary}{Corollary}
\newtheorem{lemma}{Lemma}
\title{On Grundy indices for complete geometric graphs}
\author{Dolores Lara\footnotemark[1], Christian Rubio-Montiel\footnotemark[2], Francisco Zaragoza\footnotemark[3]}
\date{\today}                                        
\begin{document}
\maketitle

\def\thefootnote{\fnsymbol{footnote}}
\footnotetext[1]{Departamento de Computaci{\' o}n, Centro de Investigaci{\' o}n y de Estudios Avanzados del Instituto Polit{\' e}cnico Nacional, Mexico, {\tt dolores.lara@cinvestav.mx}.}

\footnotetext[2]{Divisi{\' o}n de Matem{\' a}ticas e Ingenier{\' i}a, FES Acatl{\' a}n, Universidad Nacional Aut{\'o}noma de M{\' e}xico, Naucalpan, Mexico. {\tt christian.rubio@acatlan.unam.mx}.}

\footnotetext[3]{Departamento de Sistemas, Universidad Aut{\' o}noma Metropolitana Azcapotzalco, Mexico City, Mexico. {\tt franz@azc.uam.mx}.}

\begin{abstract}
    The pseudo-Grundy index of a graph is the largest number of colors that can be assigned to its edges, such that for every pair of colors $i,j$, if $i < j$ then every edge colored with color $j$ is adjacent to at least one edge colored with color $i$. This index has been widely studied. A geometric graph is a graph drawn in the plane such that its vertices are points in general position, and its edges are straight-line segments. In this paper, we extend the notion of pseudo-Grundy index for geometric graphs, and present results for complete geometric graphs.
\end{abstract}

\section{Introduction} 

Let $G = (V,E)$ be a graph. A drawing of $G$ in the plane where each of its vertices is a point, and every edge is drawn as a straight-line segment among these vertices, is a \emph{geometric graph} $\mathsf{G}$ of $G$. We assume that the set of points is in general position; that is, there are no three collinear points. A geometric graph whose vertices are in convex position is called a \emph{convex graph}. 

For geometric graphs, it is possible to define edge adjacencies in at least four different ways. We say that two edges \emph{intersect} if they have a common endpoint or if they cross. Two edges are \emph{disjoint} if they do not intersect. Depending on the type of adjacency that we consider, four different families of geometric graphs can be defined: those in which the edges intersect and their complement (disjoint); or those in which the edges cross and their complement (non-crossing). These incidence graphs were introduced in~\cite{MR2155418}.

We study colorings of geometric graphs. We say that the coloring of the edges of a graph $G$ is \emph{proper} if no two adjacent edges have the same color, and it is \emph{complete} if for every two distinct colors $i$ and $j$ used in the coloring, there exist adjacent edges of $G$ colored $i$ and $j$. Let $\mathsf{G}$ be a geometric graph defined from any of the four adjacency criteria described above. The \emph{chromatic index $\chi^{'}(\mathsf{G})$ of $\mathsf{G}$} is the smallest positive integer $k$ for which there exists a complete and proper $k$-coloring of the edges of $\mathsf{G}$. We say that a proper coloring of the edges of $\mathsf{G}$ is a \emph{Grundy coloring} if for every pair of colors $i,j$ such that $i < j$, every edge colored with color $j$ is adjacent to at least one edge colored with color $i$; every Grundy coloring is complete. The largest positive integer $k$ for which $\mathsf{G}$ has a Grundy $k$-coloring is the \emph{Grundy chromatic index of $\mathsf{G}$}, $\Gamma^{'}(\mathsf{G})$. If we drop the restriction that the Grundy coloring must be proper, then we get a related type of coloring known in the literature as \emph{pseudo-Grundy} coloring. The largest positive integer $k$ for which $\mathsf{G}$ has a pseudo-Grundy $k$-coloring is denoted by $\gamma^{'}(\mathsf{G})$ and is called the \emph{pseudo-Grundy chromatic index of $\mathsf{G}$}. 

We extend these definitions to graphs in the following way. Let $G$ be a graph. The \emph{geometric chromatic index} $\chi_g^{'}(G)$ of $G$ is the largest value $k$ for which there is a geometric graph $\mathsf{H}$ of $G$ such that $\chi^{'}(\mathsf{G}) = k$. Likewise, the \emph{geometric Grundy index} $\Gamma_g^{'}(G)$ and the \emph{geometric pseudo-Grundy index} $\gamma_g^{'}(G)$ of $G$ are defined as the smallest value $k$ for which a geometric graph $\mathsf{H}$ of $G$ exists, such that $\Gamma^{'}(\mathsf{G}) = k$ and $\gamma^{'}(\mathsf{G}) = k$, respectively.

The geometric chromatic index has been studied before. Let $\mathsf{K}^c_n$ denote the complete convex graph. In~\cite{MR2155418}, considering the adjacency criterion in which two edges are adjacent if they intersect, the authors proved that $\chi'(\mathsf{K}^c_n) = n$, and also that $n \leq \chi'_g(K_n) \leq cn^{3/2}$ for some constant $c > 0$. In the same paper, the adjacency criterion in which two edges are adjacent if they are disjoint was also considered; upper and lower bounds for $\chi_g'(K_n)$ were given. Later, for $\mathsf{K}^c_n$, the bounds were improved in~\cite{MR3862366} and \cite{MR3446120}; and finally in~\cite{jonsson2011exact,FJVW2018exact} the exact value of $\chi'(\mathsf{K}^c_n)$ was given. If $n \leq 16 $, then it is known that the exact value of $\chi'_g(K_n)$ is equal to $n-2$; this was recently proved in~\cite{MR4788271}. For the particular case in which the set of vertices is the so-called double-chain point configuration, the exact value of $\chi'(\mathsf{K_n})$ was given in~\cite{MR4104109}. In a different setting, in~\cite{MR4272849} the authors study the disjointness graph of a set of $n$ line segments in $\mathbb{R}^n$; they present an upper bound for the chromatic number of such a graph, among other results. In~\cite{MR4115538} the chromatic number of the disjointness graph of curves in $\mathbb{R}^n$ is studied.

Other geometric coloring parameters have also been studied. In~\cite{MR3461960} the authors study the geometric achromatic and pseudo-achromatic indices for complete geometric graphs, considering the criterion in which two edges are adjacent if they intersect. The achromatic index $\alpha'(\mathsf{G})$ of $\mathsf{G}$ is the largest integer $k$ for which there exists a complete and proper coloring of the edges of $\mathsf{G}$ using $k$ colors. The pseudoachromatic index $\psi'(\mathsf{G})$ of $\mathsf{G}$ is the largest integer $k$ for which there exists a complete coloring of the edges of $G$ using $k$ colors. The geometric achromatic index and the geometric pseudoachromatic index of a graph $G$, are defined as the smallest value $k$ for which a geometric graph $\mathsf{H}$ of $G$ exists such that $\alpha'(\mathsf{H})=k$ and $\psi'(\mathsf{H})=k$, respectively. They prove that $\alpha'(\mathsf{K}^c_n) = \psi'(\mathsf{K}^c_n)= \lfloor\frac{n^2+n}{4} \rfloor$, and also that
$0.0710 n^2 - \Theta(n) \leq \psi'_g(K_n) \leq 0.1781n^2 + \Theta(n)$. In~\cite{MR4341191}, the authors consider the criterion in which two edges are adjacent if they are disjoint, and present lower and upper bounds for the geometric achromatic and pseudo-achromatic indices.

In this paper, we study geometric Grundy and pseudo-Grundy indices for complete geometric graphs, under three of the four adjacency criteria. Our results are summarized in Table~\ref{tab:results}.

\begin{table}[]
    \centering
       \begin{tabular}{ c |c| c }
  Criterion & Convex position & General position\\
  \hline
  \hline
  \begin{tikzpicture}[baseline=(current bounding box.center)]
\draw[fill=black] (1,0) circle (1.5pt);
\draw[fill=black] (2,0) circle (1.5pt);
\draw[fill=black] (1,1) circle (1.5pt);
\draw[fill=black] (2,1) circle (1.5pt);
\node at (1.5,-0.5) {Crossing};
\draw[thick] (1,1) -- (2,0);
\draw[thick] (1,0) -- (2,1);
\end{tikzpicture}& 
  {
  $\begin{aligned}
 \frac{n^2}{12} &\leq \gamma'(\mathsf{K}^c_n)  \leq O \left( \frac{n^2}{\sqrt{24}} \right )
\end{aligned}$
}
& 
{$ \lfloor \frac{n^2}{400} \rfloor \leq \gamma_g'(K_n) \leq {\frac{n^2}{9}}$ }\\
  \hline
\begin{tikzpicture}[baseline=(current bounding box.center)]
\draw[fill=black] (0,0) circle (1.5pt);
\draw[fill=black] (1,0) circle (1.5pt);
\draw[fill=black] (0.5,1) circle (1.5pt);
\draw[fill=black] (2,0) circle (1.5pt);
\draw[fill=black] (3,0) circle (1.5pt);
\draw[fill=black] (2,1) circle (1.5pt);
\draw[fill=black] (3,1) circle (1.5pt);
\node at (1.5,-0.5) {Intersection};
\draw[thick] (0,0) -- (0.5,1);
\draw[thick] (1,0) -- (0.5,1);
\draw[thick] (2,1) -- (3,0);
\draw[thick] (2,0) -- (3,1);
\end{tikzpicture}& 
  {$\begin{aligned}
 \frac{n^2}8{}+\frac{n}{4} &\leq \Gamma'(\mathsf{K}^c_n) \\
      \gamma_g'(\mathsf{K}^c_n) &\leq O\left(\frac{n^2}{\sqrt{24}} \right)
\end{aligned}$}
& 
{$ \lfloor \frac{n^2}{400} \rfloor \leq \gamma_g'(K_n) \leq \frac{n^2}{9} + O(n)$} \\
  \hline
\begin{tikzpicture}[baseline=(current bounding box.center)]
\draw[fill=black] (0,0) circle (1.5pt);
\draw[fill=black] (0,1) circle (1.5pt);
\draw[fill=black] (1,0) circle (1.5pt);
\draw[fill=black] (1,1) circle (1.5pt);
\node at (0.5,-0.5) {Disjointness};
\draw[thick] (0,0) -- (0,1);
\draw[thick] (1,0) -- (1,1);
\end{tikzpicture} & 
& 
$\frac{n^2}{8}-O(n) \leq \gamma_g'(K_n) \leq \frac{n^2}{4} + O(n)$\\
\hline
\begin{tikzpicture}[baseline=(current bounding box.center)]
\draw[fill=black] (0,0) circle (1.5pt);
\draw[fill=black] (1,0) circle (1.5pt);
\draw[fill=black] (0.5,1) circle (1.5pt);
\draw[fill=black] (2,0) circle (1.5pt);
\draw[fill=black] (2,1) circle (1.5pt);
\draw[fill=black] (3,0) circle (1.5pt);
\draw[fill=black] (3,1) circle (1.5pt);
\node at (1.5,-0.5) {Non-crossing};
\draw[thick] (0,0) -- (.5,1);
\draw[thick] (.5,1) -- (1,0);
\draw[thick] (2,0) -- (2,1);
\draw[thick] (3,0) -- (3,1);
\end{tikzpicture}&  &  $\frac{n^2}{6} - O(n) \leq \gamma_g'(K_n) \leq \frac{n^2}{4} + O(n)$\\
  \hline
\end{tabular}
    \caption{Our results.}
    \label{tab:results}
\end{table}

We will proceed as follows. In the next section, we discuss the first two criteria: segment intersection and segment crossing. Section~\ref{sec:disjoint} is devoted to the study of the segment disjointness criterion. Finally, in Section~\ref{sec:non-crossing} we study the segment non-crossing criterion.

\section{Segment Intersection and Segment Crossing}\label{sec:intersecction-crossing}

In this section, we consider complete geometric graphs defined using two adjacency criteria: that in which two edges are adjacent if they intersect, that is, if the edges share a vertex or cross each other, and that in which two edges are adjacent if they cross each other. 

It is worth noticing that, under the first criterion, a clique of the graph is a straight line thrackle, and an independent set of edges is a plane matching. A straight line thrackle~\cite{MR277421} is a graph drawn in the plane so that its edges are straight line segments, and any two distinct edges meet at exactly one common vertex or cross. Under the segment intersection criterion, in any proper coloring of the graph, every thrackle with $k$ edges uses $k$ colors; the size of the largest independent edge set of the graph is an upper bound on the size of any proper chromatic class. Under the second criterion, the segment-crossing criterion, the cliques of the graph are crossing families, and an independent set of edges is a planar graph. A crossing family is a collection of line segments that have the property that every pair crosses each other~\cite{MR1289067}.

In what follows, we study the complete convex graph and give a lower bound for the geometric Grundy index and an upper bound for the geometric pseudo-Grundy index. Then, we consider the more general case in which the vertices of the complete geometric graph are not necessarily in convex position, and present lower and upper bounds for the pseudo-Grundy geometric index.
 
\subsection{Points in convex position} 

In this section, we consider the case in which the $n$ vertices of the complete geometric graph are in a convex position. We call this type of graph a \emph{complete convex  graph}, and we denote it by $\mathsf{K^{c}_{n}}$. Now, let $\mathsf{K^{c}_{n}}$ be a complete convex graph defined using any of the two intersection criteria, and let $\{1, \ldots, n\}$ be the vertices of the graph listed in clockwise order. In the rest of this subsection, we exclusively work with these types of graphs. It is important to note that all sums are taken modulo $n$, so for the sake of simplicity, we will not write this explicitly. The crossing pattern of the edge set of a complete convex graph depends only on the number of vertices, and not on their particular position. Therefore, without loss of generality, we assume that the point set of the graph corresponds to the vertices of a regular polygon.  

We prove a lower bound for the Grundy index of $\mathsf{K^{c}_{n}}$ and an upper bound for its pseudo-Grundy index, which, in turn, also sets an upper bound for its Grundy index.

\subsubsection{Lower bound}

First, we present the result for the intersection criterion; let $\mathsf{K^{c}_{n}}$ be a complete convex graph defined using this criterion. The following concept will be central in the proof of Theorem~\ref{thm:convex_lower}. For a given graph, we denote by $e_{i,j}$ the edge between the vertices $i$ and $j$. 

\begin{definition}
Given a set $J \subseteq \{1,2,  \ldots, \left \lfloor\frac{n}{2} \right \rfloor\}$, a \emph{circulant graph}  $C_n(J)$ is the graph with $n$ vertices labeled $\{1, 2, \ldots, n\}$ and a set of edges equal to $E(C_n(J)) = \{e_{i,j} : j-i \equiv k \pmod n, \text{or } j - i \equiv -k \pmod n, k \in J\}$. 
\end{definition}

The complete graph $K_n$ is the circulant graph $C_n(J)$ when $J = \{1, \ldots, \left \lfloor\frac{n}{2} \right \rfloor\}$.

In order to present a lower bound for the Grundy index of $\mathsf{K^{c}_{n}}$, we describe a (proper) Grundy coloring of its edge set. Let us describe the coloring before formally stating it. Consider the following partition of the edge set of $\mathsf{K_n}$: $E(\mathsf{K_n}) = C_n(\{1\}) \cup C_n(\{2\}) \cup \ldots \cup  C_n(\{\lfloor\frac{n}{2} \rfloor\}).$ Then, for every $i \in \{1, \ldots, \left \lfloor\frac{n}{2} \right \rfloor\}$, we describe a coloring of the edges of $C_n(\{i\})$ with $i+1$ colors, and prove that this coloring is a (proper) Grundy coloring. The coloring is as follows. First, we consider the first two edges of $C_n(\{1\})$, $e_{1,2}$ and $e_{2,3}$, assign color $1$ to $e_{1,2}$ and color $2$ to $e_{2,3}$. Now we repeat this pattern with the rest of the edges of $C_n(\{1\})$, using colors one and two alternately. We stop whenever adding one more edge to any of the chromatic classes makes the coloring no longer proper; see Figure~\ref{fig:circulantes}. We repeat this coloring method with each one of the other circulants in the partition, using $i+1$ new colors to color the first $i+1$ edges of the circulant $C_n(\{i\})$, and then coloring the rest of its edges using that same set of colors in a bottom-up greedy-coloring fashion. The greedy coloring algorithm is as follows. Given an ordering of the edges of $G$, consider each edge in this order and assign to the current edge the smallest possible color not already given to one of its neighbors. Every coloring produced by this algorithm is proper, complete, and Grundy \cite{MR2450569}.

\begin{definition}
Let $i$ be a positive integer such that $1 \leq i \leq \frac{n}{2}$, and let $e_{x,y}$ be an edge in a given circulant $C_n(\{i\})$. A \emph{rotation} of $e_{x,y}$ is an edge $e'_{x',y'}$ with $$x' = x + (i+1) c,$$ $$y' = y + (i+1) c,$$ with $c$ some positive integer such that $c \leq \frac{n-x-i}{i+1}$. 
\end{definition} 

\begin{lemma}\label{lema:rotation}
Let $e_{x,y}$ be any edge in a given circulant $C_n(\{i\})$ and let $e'_{x',y'}$ be any of its rotations. The edges $e_{x,y}$ and $e'_{x',y'}$ are disjoint. 
\begin{proof}
It follows from the definition of rotation that $x < x'$ and $y < y'$, and since $e$ and $e'$ $\in E(C_n({i}))$ then $x < y$, $x' < y'$, $y = x + i$, and $y < x'$. Therefore, $x < y < x' < y'$.
\end{proof}
\end{lemma}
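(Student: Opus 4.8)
The plan is to reduce this geometric claim to a purely combinatorial statement about the cyclic order of the four endpoints, and then invoke the standard characterization of when two chords of a convex polygon cross. Recall that under the segment-intersection criterion two edges are \emph{disjoint} exactly when they neither share an endpoint nor cross; so I must exclude both possibilities for $e_{x,y}$ and an arbitrary rotation $e'_{x',y'}$.

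First I would locate the four endpoints as honest integers in $\{1,\dots,n\}$ and establish the linear order $x < y < x' < y'$. Since $e_{x,y}\in C_n(\{i\})$ with $1\le i\le n/2$, we have $y=x+i$, hence $x<y$. From the definition of rotation, $x'=x+(i+1)c$ and $y'=y+(i+1)c$ for some integer $c\ge 1$; therefore $x'=x+(i+1)c\ge x+(i+1)=y+1>y$, which gives $y<x'$, while $y'=x'+i>x'$ gives $x'<y'$. The bound $c\le\frac{n-x-i}{i+1}$ is precisely what forces $x'\le n-i$ and thus $y'=x'+i\le n$, so no label wraps around modulo $n$; consequently the chain $x<y<x'<y'$ is the genuine clockwise cyclic order of four distinct vertices on the polygon.

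With the ordering secured, the conclusion follows from the convex-position crossing criterion: two chords of a convex polygon cross if and only if their endpoints alternate along the boundary, i.e.\ one chord separates the two endpoints of the other. Here the endpoints appear consecutively as $x,y,x',y'$, so both endpoints of $e_{x,y}$ lie on one of the two arcs cut off by $e'_{x',y'}$; the endpoints do not interleave, and the chords do not cross. Since $x,y,x',y'$ are four distinct labels, the edges also share no endpoint, and hence $e_{x,y}$ and $e'_{x',y'}$ are disjoint.

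I do not anticipate a serious obstacle. The one point requiring care is the modular arithmetic: the whole argument is phrased using the linear order on $\{1,\dots,n\}$, so it is essential to confirm that the upper bound on $c$ keeps the rotation from passing vertex $n$. Otherwise the relation $x<y<x'<y'$ could fail to reflect the true cyclic order and the crossing criterion would not apply verbatim. Once that bound is verified, the remainder is immediate.
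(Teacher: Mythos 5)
Your proposal is correct and follows essentially the same route as the paper: both establish the linear order $x < y < x' < y'$ from the definition of rotation and conclude disjointness from convex position. Your version is somewhat more complete, since you explicitly verify that the bound $c \leq \frac{n-x-i}{i+1}$ prevents wrap-around modulo $n$ and you spell out the chord-crossing criterion, both of which the paper leaves implicit.
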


\begin{theorem}
Let $\mathsf{K^{c}_{n}}$ be a complete convex graph of order $n$ defined from the edge-intersection criterion. The Grundy index $\Gamma'(\mathsf{K^{c}_{n}})$ of $\mathsf{K^{c}_{n}}$ has the following lower bound $$\frac{n^2}{8} + \frac{n}{4} \leq \Gamma'(\mathsf{K^{c}_{n}}).$$
\end{theorem}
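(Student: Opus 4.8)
The plan is to exhibit an explicit proper Grundy coloring of $\mathsf{K^{c}_{n}}$ that uses at least $\frac{n^2}{8}+\frac{n}{4}$ colors, following the scheme sketched before the statement. First I would fix the partition $E(\mathsf{K^{c}_{n}}) = C_n(\{1\}) \cup C_n(\{2\}) \cup \cdots \cup C_n(\{\lfloor n/2\rfloor\})$ into the length-$i$ chord classes and assign to the $i$-th circulant its own block of $i+1$ \emph{fresh} colors, disjoint from the colors of every circulant $C_n(\{i'\})$ with $i'<i$. Within $C_n(\{i\})$ I would color an edge $e_{a,a+i}$ according to the residue of its starting index $a$ modulo $i+1$, so that each color class is a maximal family of successive rotations of a single edge.

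Properness then comes almost for free from Lemma~\ref{lema:rotation}: two edges receiving the same color inside $C_n(\{i\})$ have starting indices differing by a multiple of $i+1$, so one is a rotation of the other, hence they are disjoint and in particular non-adjacent; colors from different circulants are distinct, so no conflict can arise across blocks. Thus every color class is an independent set and the coloring is proper. For the count, the $i$-th circulant contributes $i+1$ colors, so the total is
\[
\sum_{i=1}^{\lfloor n/2\rfloor}(i+1) = \frac{1}{2}\left\lfloor \frac{n}{2}\right\rfloor\left(\left\lfloor \frac{n}{2}\right\rfloor + 3\right).
\]
Evaluating this for $n$ even and $n$ odd, and subtracting the small deficits that occur in the boundary circulants — in particular the class $C_n(\{n/2\})$ of diameters, which is a clique under the intersection criterion and therefore admits only $n/2$ colors rather than $n/2+1$ — one checks that the total is at least $\frac{n^2}{8}+\frac{n}{4}$, with a little room to spare.

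The crux, which I expect to be the main obstacle, is verifying the Grundy condition, and here two things must be shown. \emph{Internally}, the first $i+1$ edges $e_{1,1+i}, e_{2,2+i}, \ldots, e_{i+1,2i+1}$ of $C_n(\{i\})$ are pairwise intersecting (consecutive chords of equal length either share an endpoint, when their indices differ by exactly $i$, or cross, when they differ by less than $i$), so they form a thrackle of size $i+1$ and receive all $i+1$ colors of the block; the remaining edges, colored greedily, then see all smaller colors of the block among their neighbors. \emph{Externally}, each edge of $C_n(\{i\})$ must be adjacent to at least one edge of every color used in the earlier circulants. Here I would use the convex geometry: a chord of length $i$ crosses or meets every shorter chord in a controlled band of positions, so for each earlier color one can exhibit a representative edge that intersects the given chord.

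Making this external step precise for every color simultaneously — and reconciling it with the cyclic wraparound, which perturbs the clean periodic pattern near the end of each circulant and causes the boundary deficits mentioned above — is where the real work lies. A convenient way to package the whole argument is to present the assignment as a single greedy coloring in the edge order ``circulant by circulant, index by index'', so that the cited fact that every greedy coloring is Grundy~\cite{MR2450569} applies directly, once one has argued that the seed edges of each circulant are forced to take fresh colors (i.e.\ that each of them already sees all previously used colors among its neighbors). I expect the bulk of the proof to consist of establishing exactly this forcing property together with the careful bookkeeping of the wraparound.
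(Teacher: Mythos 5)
Your construction coincides with the paper's: the same circulant partition, the same rotation-based color classes with $i+1$ fresh colors per circulant $C_n(\{i\})$, the same appeal to Lemma~\ref{lema:rotation} for properness, and the same count $\sum_{i=1}^{\lfloor n/2\rfloor}(i+1)$. The problem is that the proposal stops exactly where the theorem's content begins: you never actually prove the Grundy condition. Both the \emph{internal} claim for non-seed edges and the \emph{external} claim across circulants are stated as things you ``would'' establish, and you explicitly defer them (``where the real work lies,'' ``I expect the bulk of the proof to consist of establishing exactly this forcing property''). The greedy repackaging does not remove the gap: greedy assigns the \emph{smallest} available color, so the seed edges of circulant $i$ are forced to take fresh colors only if they already see every previously used color among their neighbors --- which is precisely the unproven forcing property, restated. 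As written, the proposal is a correct setup plus an accurate description of what remains to be done, not a proof.

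What closes the gap is one geometric observation, and it is the heart of the paper's argument. Fix a color class $c$: its edges are all rotations of a single chord of length $i$, spaced $i+1$ apart, so their spans tile the convex polygon --- every vertex is either an endpoint of a class-$c$ edge or lies strictly beneath exactly one class-$c$ edge, except for a leftover arc of at most $i$ vertices caused by the wraparound cutoff. Consequently, any edge that intersects \emph{no} edge of class $c$ must have both endpoints strictly beneath one class-$c$ chord (at most $i-1$ available vertices there) or both in the leftover arc; in either case its length is at most $i-1$. But every edge carrying a color larger than those of circulant $i$ lies in some $C_n(\{i'\})$ with $i'\geq i$, i.e.\ is a chord of length at least $i$, so it cannot avoid class $c$. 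This single separation argument disposes of your internal and external steps simultaneously (and shows the anticipated ``band of positions'' bookkeeping is unnecessary); it is what the paper expresses by saying that an avoiding edge would separate $i'+1$ points while the largest separable subset has size $i-1$. Without some version of this argument, the Grundy property --- and hence the theorem --- is not established.
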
\label{thm:convex_lower}

\begin{proof}

Consider the following partition of the edge set of $\mathsf{K}^c_n$: $$\bigcup_{i=1}^{\left \lfloor\frac{n}{2} \right \rfloor} E(C_n(\{i\})).$$

For every $i \in \{1, \ldots, \left \lfloor\frac{n}{2} \right \rfloor\}$, we describe a coloring of the edges of $C_n(\{i\})$ with $i+1$ colors, and prove that this coloring is a (proper) Grundy coloring. 

Start with $i=1, k = 0$. Consider the set of the first $i+1$ edges of $C_n(\{i\})$, namely, $\{e_{1,(i+1)}, e_{2,(i+2)}, \ldots, e_{(i+1), i+(i+1)}\}$. Assign a different color to each of these edges, specifically, for $l \in \{1, \ldots, i+1\}$ assign the color $k+l$ to the edge $e_{l, (i+l)}$. Also, for $l \in \{1, \ldots, i+1\}$, assign the color $k+l$ to all possible rotations of $e_{l, (i+l)}$; see Figure~\ref{fig:circulantes-uno}. Repeat the above procedure for every $i \in \left \{1, \ldots, \left \lfloor\frac{n}{2} \right \rfloor \right \}$, increasing the value of $k$ by $i+1$ each time; see Figure~\ref{fig:circulantes-dos}.

We use $i+1$ colors for each circulant, with $i \in \{1, \ldots, \left \lfloor\frac{n}{2} \right \rfloor\}$, therefore, the total number of colors is $\sum_{i=1}^{\lfloor\frac{n}{2}\rfloor} i+1\geq \frac{n^2+2n}{8}$. Now we show that the coloring is a Grundy coloring. 

Two edges have the same color only if they belong to the same circulant and one is a rotation of the other. For Lemma~\ref{lema:rotation}, any edge and all its rotations are disjoint, and therefore the coloring is proper. To show that it meets the Grundy property, suppose to the contrary that there exists an edge $e' \in E(C_n({i'}))$ colored with color $c'$ such that it does not intersect any edge in the chromatic class $c$, with $c < c'$. Consider the set of edges in the chromatic class $c$, by construction all these edges belong to the same circulant graph, say $C_n(\{i\})$ and are the rotations of one starting edge, say $e$; see Figure~\ref{fig:samecolor}. Since $c < c'$, then $i < i'$. $e'$ does not intersect any edge with color $c$, therefore a subset of points of size $i'+1$ can be separated from the set of vertices without intersecting any edge with color $c$, but this is a contradiction since the size of the biggest of these subsets is $i-1$.

Note that not all edges of the graph have been colored; we color the remaining edges using the greedy-coloring algorithm. The coloring is still proper, and so is Grundy.
\end{proof}

\begin{figure}
    \centering
    \begin{subfigure}[b]{0.4\textwidth}
        \includegraphics[scale=.6]{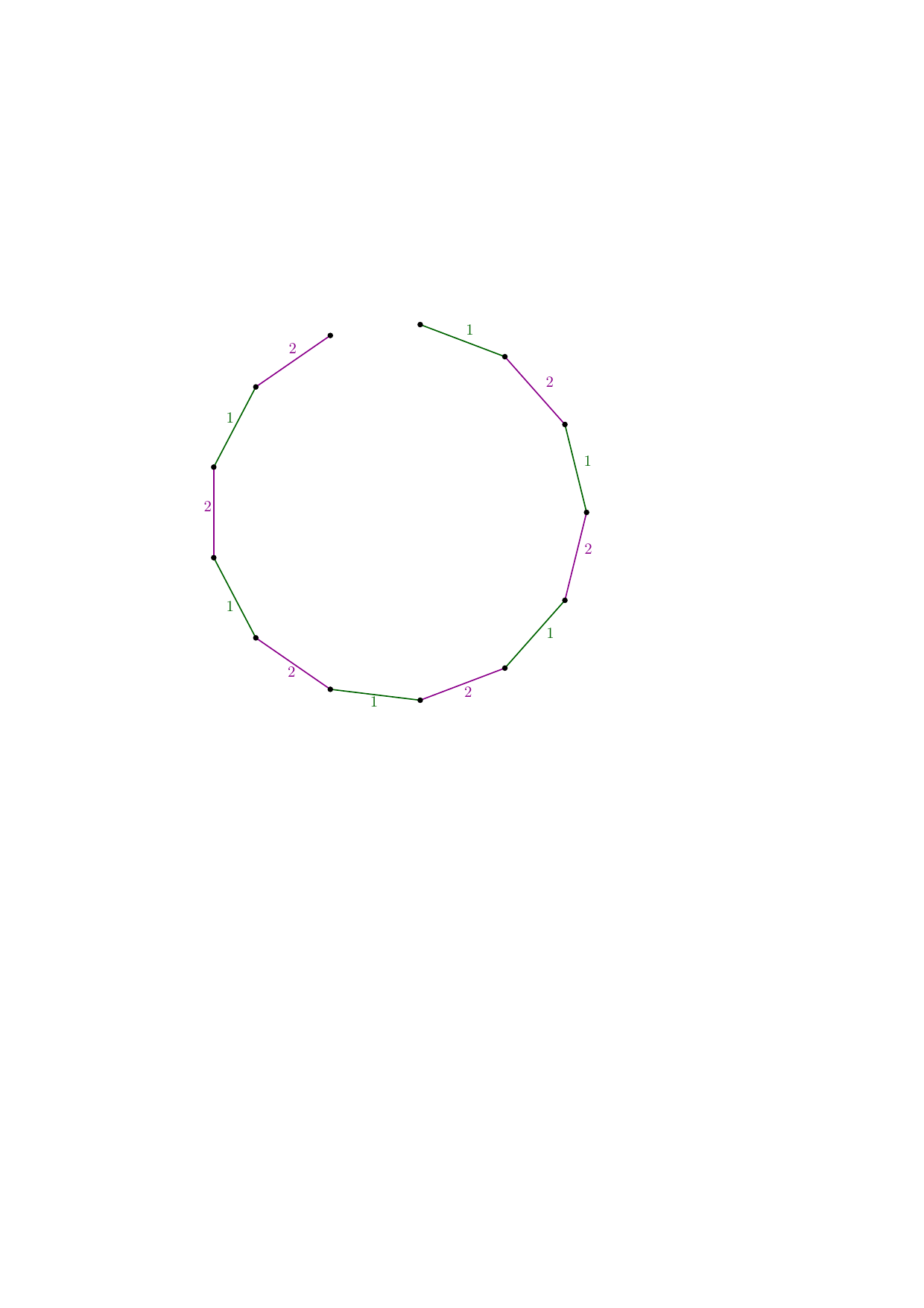}
        \caption{First circulant.} \label{fig:circulantes-uno}
    \end{subfigure}
    \begin{subfigure}[b]{0.4\textwidth}
        \includegraphics[scale=.6]{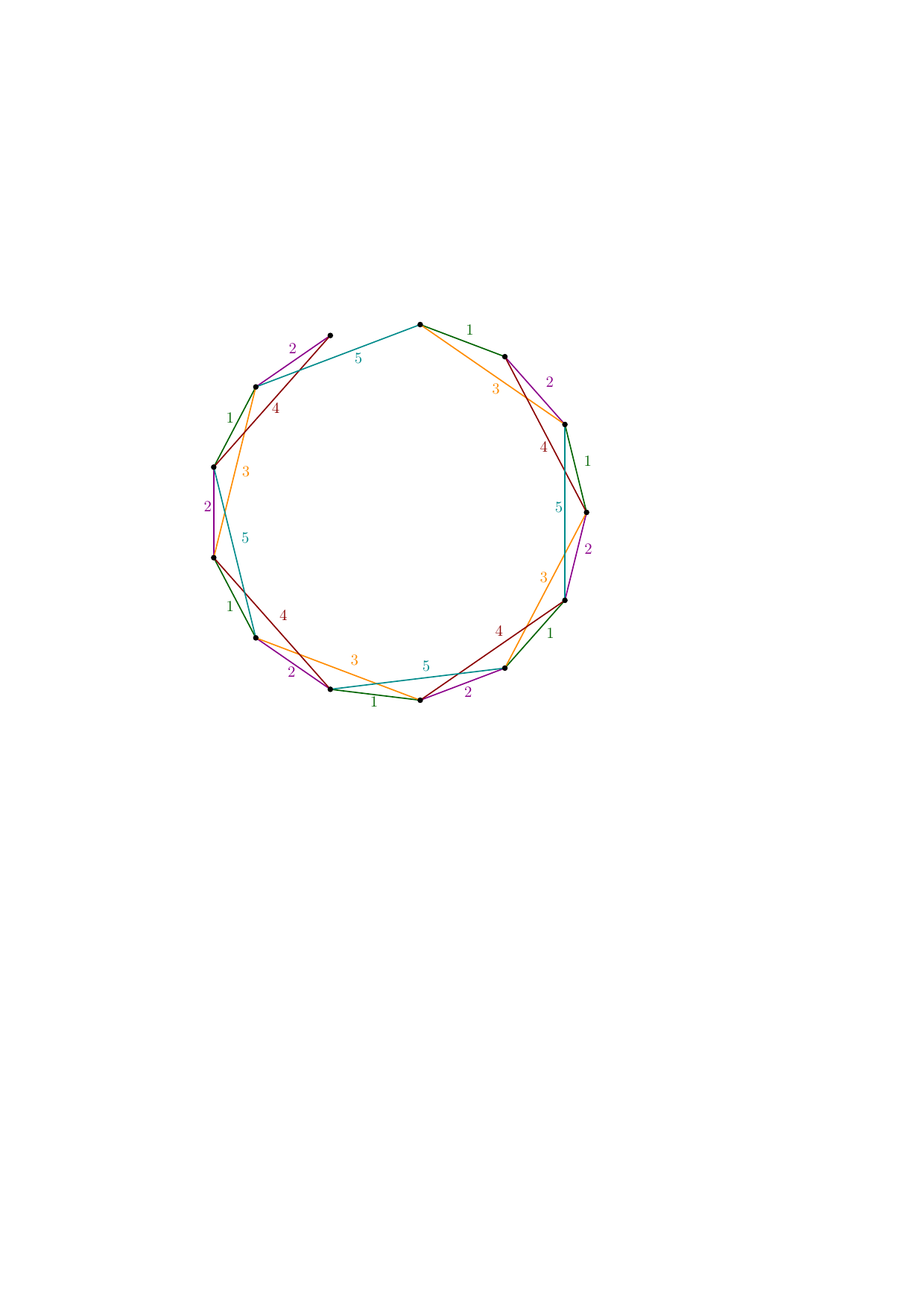}
        \caption{First and second circulant.} \label{fig:circulantes-dos}
    \end{subfigure}
\caption{The coloring of the first two circulants of the complete convex  graph with $13$ vertices.}
\label{fig:circulantes}
\end{figure}

\begin{figure}
\begin{center}
\includegraphics[scale=.7]{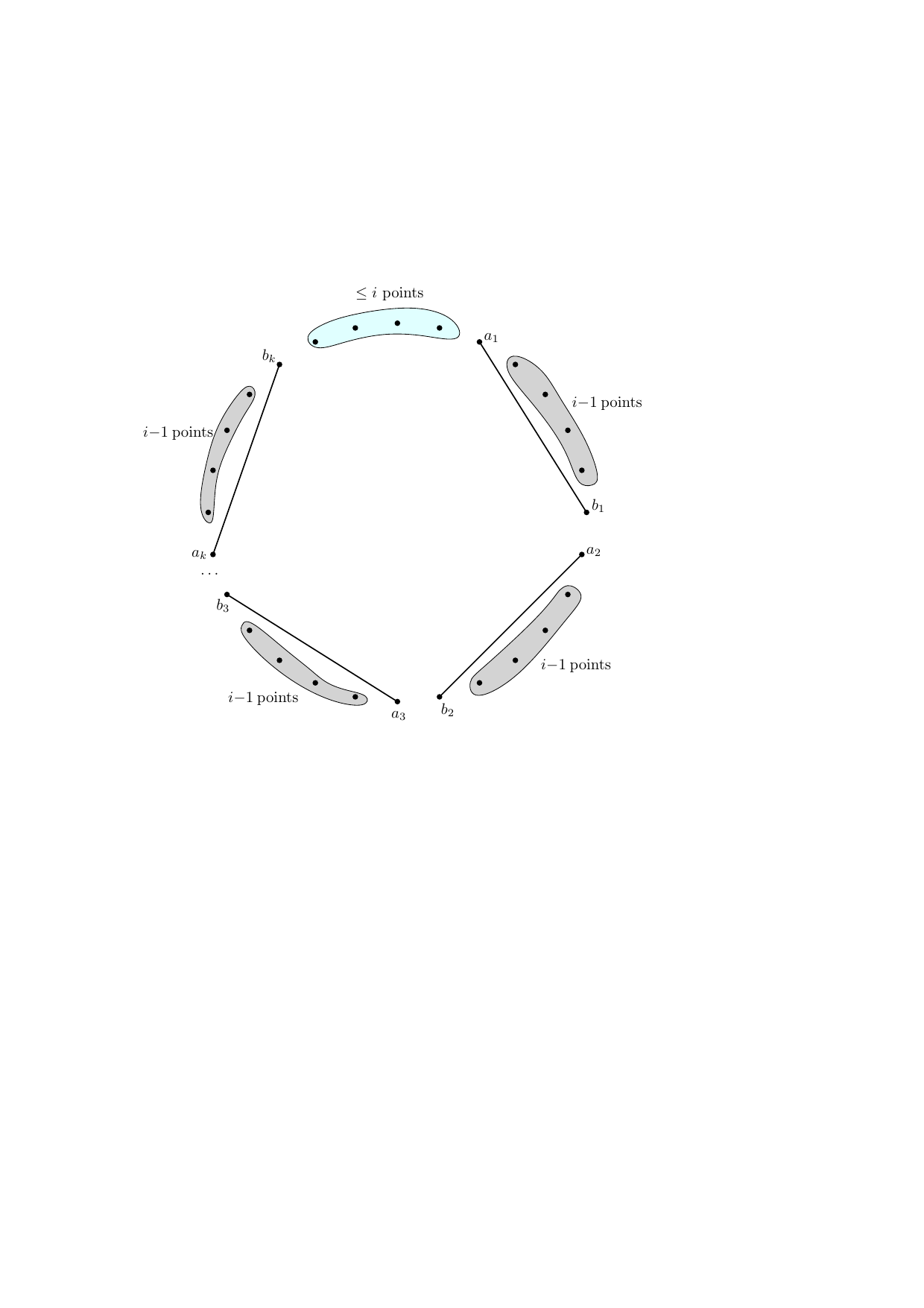}
\end{center}
\caption{A set of edges with the same color.} \label{fig:samecolor}
\end{figure}

Now we present a lower bound for the edge-crossing criterion.

\begin{theorem}
Let $\mathsf{K^{c}_{n}}$ be a complete convex graph of order $n$ defined from the edge-crossing criterion. The pseudo-Grundy index $\gamma'(\mathsf{K^{c}_{n}})$ of $\mathsf{K^{c}_{n}}$ has the following lower bound 
$$\frac{\left(n^2-16\right)}{12} \leq\gamma'(\mathsf{K^{c}_{n}}).$$
\end{theorem}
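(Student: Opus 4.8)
The plan is to adapt the circulant construction of Theorem~\ref{thm:convex_lower} to the weaker edge-crossing criterion. Since two edges are now adjacent only when they genuinely cross (a shared endpoint no longer counts), a coloring that is pseudo-Grundy for crossing is automatically pseudo-Grundy for intersection, but not conversely; hence the coloring of Theorem~\ref{thm:convex_lower} cannot be reused verbatim, and, more importantly, each ``higher/lower'' color pair must now be certified by an actual crossing. This crossing-versus-incidence gap is exactly what forces the smaller constant $\tfrac1{12}$ in place of $\tfrac18$.

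First I would reuse the partition $E(\mathsf{K}^c_n)=\bigcup_{i=1}^{\lfloor n/2\rfloor} E(C_n(\{i\}))$ and, inside each circulant $C_n(\{i\})$, color a block of consecutive edges with fresh colors and then propagate each of those colors to all rotations of its seed edge. By Lemma~\ref{lema:rotation} every color class is then a family of pairwise disjoint edges evenly spaced around the polygon, with consecutive representatives exactly $i+1$ vertices apart. The colors are ordered by increasing $i$, that is, by increasing edge length, so that the lower colors live in the short circulants and the higher colors in the long ones. The number of fresh colors spent on $C_n(\{i\})$ is kept at the largest value for which the crossing verification below still holds; the point of the argument is to show this value is a fixed fraction of $i$, so that summing over $i$ yields $\tfrac{n^2}{12}-O(1)$, and a careful bookkeeping of the boundary terms gives precisely $\tfrac{n^2-16}{12}$.

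The heart of the proof is the pseudo-Grundy verification. Fix colors $c<c'$, say with the $c$-class inside $C_n(\{i\})$ and the $c'$-class inside $C_n(\{i'\})$, where $i\le i'$ by the ordering. I must show that every edge $e'$ of color $c'$ crosses at least one edge of color $c$. The $c$-class edges are length-$i$ chords whose seed is copied every $i+1$ vertices, so they tile the circle into arcs of bounded size, whereas $e'$ subtends an arc of $i'\ge i$ vertices on its short side. The key geometric claim is that an arc avoiding every $c$-colored edge, in the sense of meeting no such chord in an interleaving pattern, can contain only a bounded number of vertices -- a separation statement in the spirit of the one used in Theorem~\ref{thm:convex_lower}, but now demanding a true crossing rather than a mere incidence. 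Once this claim is in place, the span of $e'$ is too large to be separated from the $c$-class, so one of the evenly spaced length-$i$ chords must interleave with, hence cross, $e'$. Uncolored edges are finally absorbed greedily, which cannot destroy the property.

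The main obstacle is precisely this crossing-versus-incidence gap: under intersection one could finish a verification by exhibiting a shared endpoint, whereas here I must produce genuinely interleaving endpoints, and whether this happens is sensitive to the relative lengths $i,i'$ and to the $i+1$ spacing of the rotations. Consequently the delicate quantitative step is to choose the per-circulant block sizes and the phases of the rotations so that the forced interleaving always occurs for $i\le i'$, while simultaneously making the per-circulant color count as large as possible; balancing these two requirements is what pins down the constant $\tfrac1{12}$ and the additive $-16$.
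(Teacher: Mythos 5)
Your proposal contains a fatal gap: the ``key geometric claim'' on which your pseudo-Grundy verification rests is false under the crossing criterion, and no choice of phases or per-circulant block sizes can repair it. Concretely, consider the edge $e_{1,4} \in C_n(\{3\})$ and the shortest circulant $C_n(\{1\})$. A chord crosses $e_{1,4}$ only if exactly one of its endpoints lies strictly inside the arc $\{2,3\}$ and the other lies strictly outside $\{1,2,3,4\}$; the only edges of $C_n(\{1\})$ having an endpoint in $\{2,3\}$ are $e_{1,2}$, $e_{2,3}$ and $e_{3,4}$, and each of these either has both endpoints in the closed arc or shares an endpoint with $e_{1,4}$. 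Hence $e_{1,4}$ crosses \emph{no} edge of $C_n(\{1\})$ at all. Since your construction (like the paper's intersection-criterion construction that you are adapting) makes every color class a subset of a single circulant, colors all of $C_n(\{1\})$ with the lowest colors, and assigns $e_{1,4}$ a strictly larger fresh color, the pseudo-Grundy condition fails for $e_{1,4}$ against every class of $C_n(\{1\})$ --- not because of bad alignment that tuning could fix, but because the required crossings do not exist in the graph. More generally, an edge joining two left block-endpoints of a class, $e_{x+(i+1)t,\,x+(i+1)(t+m)}$, fails against that class no matter how large $m$ is: the blocks it spans are either wholly inside its arc or attached to it at a shared vertex. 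So the intuition that ``a sufficiently long edge cannot avoid a short class'' is exactly what breaks when shared endpoints stop counting as adjacencies; this is the intersection-versus-crossing gap you flagged in your last paragraph but never resolved, and it cannot be resolved within the circulant framework, because classes confined to a short circulant can be completely invisible (crossing-wise) to longer edges.

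The paper's proof abandons the circulant decomposition entirely. Assuming $n$ is a power of two, it recursively partitions the vertex set into $2^{j+1}$ consecutive arcs $V_{j,i}$ and, at each level $j$, colors the edges between $V_{j,1}$ and $V_{j,3}$ with fresh colors, reusing those colors on the rotated pairs $(V_{j,i}, V_{j,i+2})$. In each resulting color class, consecutive edges genuinely cross, so the class together with these crossing points traces a closed curve $\alpha$, and any edge carrying a larger color must cross $\alpha$ and therefore cross some edge of the class; this is how true crossings (rather than incidences) are certified. The count is then the geometric series $\sum_{j=2}^{\log_2 n - 1} \left(\frac{n}{2^j}\right)^2 = \frac{1}{12}\left(n^2-16\right)$, so the constant $\frac{1}{12}$ and the additive $-16$ come from this series, not from balancing block sizes against a crossing requirement. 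If you wish to salvage your approach, the essential change is that color classes must span vertices spread around the whole polygon in a way that forces interleaving with every long edge --- precisely what the bipartite-between-opposite-arcs classes achieve and what rotations of a single short chord cannot.
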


\begin{proof}
Let $\mathsf{K^{c}_{n}}$ be a complete convex graph of order $n$ defined from the edge-crossing criterion, and let $\{1, \ldots, n\}$ be the vertices of the graph listed in clockwise order. To avoid parity issues, we assume, without loss of generality, that $n$ is a power of two. Otherwise, consider the subset $\{1, 2, \ldots, m\}$, where $m$ is the biggest power of two less than $n$.

Consider the following partition of the set of vertices,
$V_{1,1} = \{1, \ldots, \frac{n}{4}\}$, 
$V_{1,2} = \{\frac{n}{4}+1, \ldots, \frac{2n}{4}\}$ ,
$V_{1,3} = \{\frac{2n}{4}+1, \ldots, \frac{3n}{4}\}$,
$V_{1,4} = \{\frac{3n}{4}+1, \ldots, n\}$. Color the edges between $V_{1,1}$ and $V_{1,3}$ using the $(\frac{n}{4})^2$ biggest colors; reuse these colors to color the edges between $V_{1,2}$ and $V_{1,4}$. We repeat this action several times, considering partitions with subsets of smaller size each time. That is, for the next step, we consider the eight subsets, each of size $\frac{n}{8}$, $V_{2,i} = \{\frac{(i-1)n}{8}+1, \ldots, \frac{in}{8}\}$, with $1 \leq i \leq 8$; Figure~\ref{fig:crossing} shows $V_{2,i}$ for odd $i$ in blue, and $V_{2,i}$ for even $i$ in red.  Color the edges between $V_{2,1}$ and $V_{2,3}$ with the next $(\frac{n}{8})^2$ biggest colors; reuse these colors to color the edges between $V_{2,i}$ and $V_{2,i+2}$, with $2 \leq i \leq 8$; sum is taken modulo $8$. For step $j$, we consider the $2^{j+1}$ subsets, each of size $\frac{n}{2^{j+1}}$, $V_{j,i} = \{\frac{(i-1)n}{2^{j+1}}+1, \ldots, \frac{in}{2^{j+1}}\}$, with $1 \leq i \leq 2^{j+1}$ and $3 \leq j \leq \log n -1$; all sums are taken modulo $2^{j+1}$. For each $j$, color the edges between $V_{j,1}$ and $V_{j,3}$ with the next 
$(\frac{n}{2^j})^2$ biggest colors, reuse these colors to color the edges between $V_{j,i}$ and $V_{j,i+2}$.

Next, we show that each pair of consecutive edges in the same chromatic class crosses each other; Figure~\ref{fig:crossing} shows one chromatic class of $\mathsf{K^{c}_{32}}$. Let $(i,j)$ be some edge that has been colored by the algorithm described above; its consecutive edge with the same color must be of the form $(i+s, j+s)$, where $s$ is the cardinality of the sets that define the two edges; that is $s \in \{n/4, n/8, \ldots, 1\}$. To prove our claim, it is sufficient to show that $i < i+s < j$, since the point set is in convex position. The first inequality is trivially true because $s > 0$; now, by definition, there must be at least $s$ points between $i$ and $j$, so the second inequality is also true. 

Now we show that the coloring is pseudo-Grundy. Let $\mathcal{C}$ be the set of edges colored with color $l$.  For each pair of consecutive edges in $\mathcal{C}$, consider their crossing point. These points, together with the set of edges $\mathcal{C}$, define a closed curve $\alpha$. Now, consider an edge $e$ colored with color $k$ such that $k > l$. $e$ crosses $\alpha$, because otherwise, by definition, $k$ cannot be greater than $l$.

The number of colors used is $\sum_{j=2}^{\log_2 n -1}(\frac{n}{2^j})^2 = \frac{1}{12} (n^2-16)$. We color the remaining edges using the greedy-coloring algorithm. This completes the proof.

\begin{figure}
    \centering
    \includegraphics[width=0.5\linewidth]{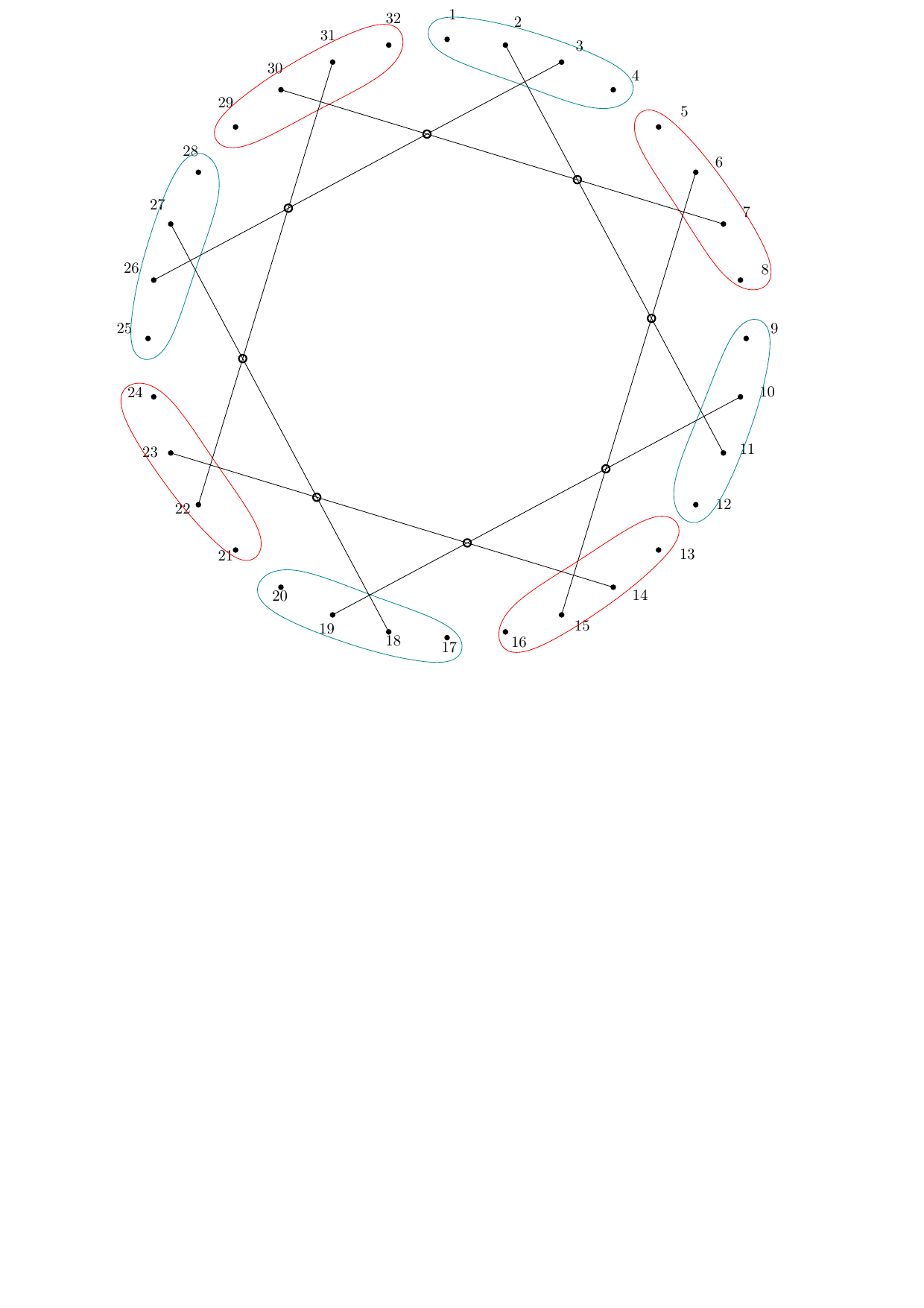}
    \caption{A chromatic class of $\mathsf{K^{c}_{32}}$}
    \label{fig:crossing}
\end{figure}

\end{proof}

\subsubsection{Upper bound}

The following result was given in~\cite{MR15796}.

\begin{theorem}
Any geometric graph with $n$ vertices and $n+1$ edges contains two disjoint edges.
\end{theorem}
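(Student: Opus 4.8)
The plan is to prove the contrapositive: a geometric graph $\mathsf{H}$ on $n$ vertices in which every two edges intersect (share an endpoint or cross) has at most $n$ edges. Such an $\mathsf{H}$ is exactly a straight-line thrackle in the sense recalled in Section~\ref{sec:intersecction-crossing}, and once the bound $|E(\mathsf{H})|\le n$ is established the theorem is immediate, since $n+1$ edges then force a disjoint pair. So from now on I assume that no two edges of $\mathsf{H}$ are disjoint and aim to exhibit an injection from $E(\mathsf{H})$ into $V(\mathsf{H})$.

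First I would fix a generic direction (say the positive $x$-axis) so that the $n$ points have pairwise distinct $x$-coordinates and no edge is vertical; this is possible after an arbitrarily small rotation, since the points are in general position. I then orient every edge toward its endpoint of larger $x$-coordinate, so that each edge points into the open right half-plane as seen from its tail. The charging rule I would use assigns every edge to exactly one endpoint: an edge is charged to its tail $u$ when it is the out-edge of largest angle at $u$ (the ``topmost'' edge leaving $u$), and otherwise it is charged to its head. This defines a map $\phi\colon E(\mathsf{H})\to V(\mathsf{H})$, and the whole argument reduces to proving that $\phi$ is injective, which yields $|E(\mathsf{H})|\le |V(\mathsf{H})|=n$. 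I have checked that this rule produces a bijection on the extremal examples (for instance the triangle, and more generally the pentagram-type star polygons realizing the tight bound), which is the sanity check I would rely on while setting it up.

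The heart of the matter, and the step I expect to be the main obstacle, is the local lemma that no vertex is charged twice. Suppose two edges were both charged to a common vertex $v$. By the rule at most one of them is the topmost out-edge at $v$, so the other must be an in-edge of $v$ that is \emph{non}-topmost at its own tail $u$. Unwinding the two angular conditions — ``topmost at $v$'' and ``not topmost at $u$'' — together with the left-to-right order of the four endpoints, pins down the cyclic order of the segments around $v$ and around $u$ so tightly that two of the involved segments are forced to lie on opposite sides of one another with no shared endpoint, hence to be disjoint. That contradiction is precisely where the ``no two disjoint edges'' hypothesis is consumed. The work here is a finite case analysis driven entirely by the cyclic order of incident edges at the two vertices and the relative $x$-coordinates of their endpoints; isolating it as a single no-collision lemma is, in my view, the cleanest way to organize the argument.

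Finally, I would record that the bound is best possible, so no slack is wasted: for odd $n$ the star polygon (the pentagram when $n=5$) is a geometric graph on $n$ vertices with exactly $n$ pairwise-intersecting edges. As an alternative route one can try induction on $n$ by deleting a vertex $p$ of the convex hull, whose incident edges all lie in an angular wedge of opening less than $\pi$; the obstruction there is that deleting a vertex of degree $d$ removes $d$ edges but only one vertex, so the induction fails to close unless one simultaneously accounts for the non-extreme edges at $p$ — which again reduces to a local lemma of the same flavor. For that reason I would favor the injective charging argument, concentrating all the geometry into the no-collision lemma described above.
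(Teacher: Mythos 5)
The no-collision lemma at the heart of your argument is false, and a counterexample is the very triangle you offer as a sanity check. Place $a,b,c$ with $x_a<x_b<x_c$ and with $b$ strictly \emph{below} the line through $a$ and $c$ (you appear to have checked only the mirror configuration with $b$ above). All three edges pairwise share endpoints, so this graph satisfies the standing hypothesis that no two edges are disjoint. Orienting by increasing $x$-coordinate: at $a$ the out-edges are $ab$ and $ac$, and the topmost one is $ac$, so $ac$ is charged to its tail $a$, while $ab$, being non-topmost at its tail, is charged to its head $b$; at $b$ the unique out-edge $bc$ is vacuously topmost, so it is charged to its tail $b$. Hence $ab$ and $bc$ both charge $b$ (and $c$ is charged by nothing): $\phi$ is not injective. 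No finite case analysis can repair this, because the colliding configuration occurs in a graph that satisfies your hypothesis, so the contradiction (a pair of disjoint edges) that you plan to extract from a collision simply is not there. The defect is structural: your rule looks only at the out-edges at the tail, so an in-edge arriving at a vertex and the topmost out-edge leaving that same vertex can always collide.

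For context, the paper does not prove this statement at all; it is the Hopf--Pannwitz theorem, imported from the literature. The classical proof (due to Perles) shows how the charging must be organized, and it allocates the hypothesis to the opposite half of the argument. Call $uv$ a \emph{pivot} at $u$ if every other edge incident to $u$ (all of them, not just the rightward-pointing ones) lies at counterclockwise angle less than $\pi$ from the ray from $u$ through $v$. At most one edge can be a pivot at any given vertex, since if $uw$ lies at counterclockwise angle $\alpha<\pi$ from $uv$, then $uv$ lies at counterclockwise angle $2\pi-\alpha>\pi$ from $uw$; so injectivity is automatic and costs nothing. The hypothesis is spent instead on well-definedness: if $uv$ were a pivot at neither endpoint, there would be an edge $ua$ with $a$ strictly on one side of the line through $u$ and $v$, and an edge $vb$ with $b$ strictly on the other side, and these two edges are disjoint, a contradiction; hence every edge is a pivot at some endpoint and can be charged there. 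Your proposal uses the hypothesis to forbid collisions, which it cannot do; the correct proof uses it to guarantee that every edge gets charged at all. Replacing ``topmost out-edge at the tail'' by this two-sided extremality condition at either endpoint closes the argument in a few lines.
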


Using this theorem the following result can be obtained, the order of a graph denotes the number of its vertices.

\begin{corollary}\label{cor}
Let $\mathsf{G}$ be a geometric graph of order $n$ defined from the edge-intersection or the edge-crossing criteria. There are at most $n$ chromatic classes of size one in any complete coloring of $\mathsf{G}$. In particular, there are at most $n$ chromatic classes of size one in any Grundy coloring of $\mathsf{G}$.
\end{corollary}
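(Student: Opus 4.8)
The plan is to leverage the cited theorem---that any geometric graph with $n$ vertices and $n+1$ edges contains two disjoint edges---together with the observation that a chromatic class of size one is a single edge that must be adjacent to every other color class in a complete (hence Grundy) coloring. First I would set up the contrapositive framework: suppose toward a contradiction that a complete coloring of $\mathsf{G}$ has at least $n+1$ chromatic classes of size one. From each such singleton class select its unique edge, giving a set $S$ of at least $n+1$ edges among the $n$ vertices of $\mathsf{G}$.

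Next I would apply the theorem to the subgraph induced by $S$: since $|S| \geq n+1$ and the vertex set still has size $n$, the theorem guarantees that $S$ contains two disjoint edges, say $e$ and $e'$. The key step is to translate ``disjoint'' into a failure of the completeness condition under the two relevant adjacency criteria. Under the edge-intersection criterion, two edges are adjacent precisely when they intersect (share a vertex or cross), so disjoint edges are non-adjacent. Under the edge-crossing criterion, adjacency means crossing; here I would need to check that the theorem's ``disjoint'' edges---those sharing no endpoint and not crossing---are likewise non-adjacent, which holds directly since crossing is required for adjacency. In either case, $e$ and $e'$ are non-adjacent edges, each forming its own singleton color class.

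The contradiction then follows from the definition of a complete coloring: for the two distinct colors assigned to $e$ and $e'$, completeness demands adjacent edges of those two colors, but the only edges carrying those colors are $e$ and $e'$ themselves, which are non-adjacent. This violates completeness, so there can be at most $n$ singleton classes. Since every Grundy coloring is complete (as noted in the introduction), the bound applies in particular to Grundy colorings.

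I expect the main obstacle to be a subtle mismatch between the theorem's notion of ``disjoint'' and the paper's adjacency criteria, especially for the crossing criterion: I must confirm that the theorem really produces a pair that is non-adjacent under \emph{each} criterion separately, rather than merely producing a pair that is non-adjacent under one of them. For the intersection criterion this is immediate. For the crossing criterion, two edges that are disjoint in the theorem's sense certainly do not cross, so they are non-adjacent, and the argument goes through unchanged; I would state this explicitly to avoid any gap. A secondary point worth a sentence is that the selected edge set $S$ may have repeated or coincident vertices, but since all edges live on the same $n$-vertex set in general position, the hypothesis of the theorem is met verbatim.
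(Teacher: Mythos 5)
Your proposal is correct and matches the paper's intended argument exactly: the paper derives this corollary directly from the cited theorem of Erd\H{o}s (any geometric graph on $n$ vertices with $n+1$ edges contains two disjoint edges), and your contradiction argument --- $n+1$ singleton classes would yield two disjoint, hence non-adjacent, edges whose colors violate completeness under either criterion --- is precisely how that derivation goes. The paper leaves this reasoning implicit, so your write-up simply makes explicit what the authors omitted.
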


Consider two intersecting edges in $\mathsf{K}^c_n$ defined from either one of the two criteria, the intersection might occur at a common interior point (crossing) or at a common endpoint (at a vertex). Denote by $m$ the total number of intersections that occur at the vertices, since $\mathsf{K}^c_n$ is the complete graph then $ m = n \binom{n-1}{2}$, if we are using the edge-intersection criterion or $m=0$ if we are using the edge-crossing criterion. Denote by $cr$ the total number of intersections that occur at interior points, since $\mathsf{K}^c_n$ is the complete graph and the set of points is in convex position, then $ cr = \binom{n}{4} $, in both cases. 

Using these observations, we prove the following result:

\begin{theorem}\label{thm:upper_bound}
Let $\mathsf{K}^c_n$ be a complete convex graph of order $n$ defined from the edge-intersection or the edge-crossing criteria. The pseudo-Grundy number $\gamma(\mathsf{K}^c_n)$ of $\mathsf{K}^c_n$ is at most $O(\frac{n^2}{\sqrt{24}})$.

\begin{proof}
Let $\gamma$ be the pseudo-Grundy number of $\mathsf{K}^c_n$, and let $\{1, 2, \ldots, \gamma\}$ be the set of colors used in a Grundy coloring of its edges, using $\gamma$ colors.

Corollary~\ref{cor} implies that the maximum number of chromatic classes of size one in any Grundy coloring of the graph is $n$. Suppose that all other chromatic classes are of size at least two. We show the result by counting the minimum number of incidences required in a Grundy coloring with these properties. It is not hard to see that a coloring that minimizes the required number of incidences must be one in which the chromatic classes of size one are precisely $\{\gamma, \gamma-1, \ldots, \gamma-(n-1)\}$. Let us count the number of incidences used by each edge. Let $i \in \{0, 1, \ldots, n-1\}$ and $j \in \{n, n+1, \ldots \gamma-1\}$. An edge with color $\gamma - i$ requires at least $\gamma -i -1$ incidences with smaller colors, while an edge with color $\gamma - j$ requires at least $\gamma - j - 1$ incidences with smaller colors. Therefore, the number of incidences required is at least $\binom{\gamma}{2} + \binom{\gamma-n}{2} \leq  cr \leq m + cr$. After doing some algebra, we obtain the desired result.
\end{proof}
\end{theorem}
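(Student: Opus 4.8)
The plan is to bound $\gamma := \gamma(\mathsf{K}^c_n)$ by counting adjacent (incident) pairs of edges in two ways: I will derive a lower bound on the number of adjacencies that \emph{any} pseudo-Grundy $\gamma$-coloring is forced to use, and compare it against the total number of adjacencies available in $\mathsf{K}^c_n$, which is governed by $m$ and $cr$.

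First I would set up the forced lower bound. Fix a pseudo-Grundy coloring with colors $\{1,\dots,\gamma\}$ and let $s_c$ denote the size of the class of color $c$. The pseudo-Grundy property forces every edge of color $c$ to be adjacent to at least one edge of each smaller color, hence to use at least $c-1$ distinct neighbours, one per smaller color. Summing over all edges, the coloring uses at least $\sum_{c=1}^{\gamma} s_c(c-1)$ adjacencies. The point to justify carefully is that these are genuinely distinct adjacencies: an adjacency $\{e,f\}$ with $c_e<c_f$ can only be charged to the requirement of $f$ toward color $c_e$ (it is useless for any downward requirement of $e$), so no adjacency is counted twice. Therefore $\mathsf{K}^c_n$ must contain at least $\sum_c s_c(c-1)$ adjacencies.

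Next I would minimize $\sum_c s_c(c-1)$ subject to the structural constraints and combine with the geometry. By Corollary~\ref{cor} at most $n$ classes are singletons, and we may assume every other class has size at least two. Since the weight $c-1$ increases with $c$, an exchange (rearrangement) argument shows the minimum is attained when the singletons are the $n$ largest colors $\gamma,\gamma-1,\dots,\gamma-n+1$ and every smaller color has size exactly two; a short telescoping of binomial coefficients then gives the minimum value $\binom{\gamma}{2}+\binom{\gamma-n}{2}$. The total number of adjacencies in $\mathsf{K}^c_n$ equals $m+cr$ under the intersection criterion and $cr$ under the crossing criterion, with $m=n\binom{n-1}{2}$ and $cr=\binom{n}{4}$, so
\[
\binom{\gamma}{2}+\binom{\gamma-n}{2}\;\le\; m+cr .
\]
Since $cr=\binom{n}{4}=\Theta(n^4/24)$ dominates $m=O(n^3)$, the right side is $\sim n^4/24$; and for $\gamma$ much larger than $n$ the left side is $\sim\gamma^2$. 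Solving $\gamma^2\lesssim n^4/24$ yields $\gamma=O(n^2/\sqrt{24})$, and the identical computation applies to the crossing criterion since $m$ is lower order.

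The main obstacle I expect is the lower-bound bookkeeping rather than the final algebra: one must make precise that the $c-1$ requirements of a single edge consume $c-1$ \emph{different} neighbours, that no adjacency is double-charged across distinct edges, and that the extremal size profile (singletons placed at the top, twos below) is indeed the minimizer of $\sum_c s_c(c-1)$ under the constraint supplied by Corollary~\ref{cor}. Once this counting is clean, substituting $m$ and $cr$ and solving the resulting quadratic inequality in $\gamma$ is routine.
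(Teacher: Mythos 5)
Your proposal is correct and takes essentially the same route as the paper: both count the forced incidences $\sum_c s_c(c-1)$, use Corollary~\ref{cor} to place the at most $n$ singleton classes at the top colors with all remaining classes of size two, arrive at the bound $\binom{\gamma}{2}+\binom{\gamma-n}{2}\leq m+cr$ (respectively $\leq cr$ for the crossing criterion), and solve the quadratic to get $\gamma = O(n^2/\sqrt{24})$. Your write-up is in fact more explicit than the paper's on the two points it glosses over --- that no adjacency is double-charged (each pair $\{e,f\}$ with $c_e<c_f$ is charged only to $f$'s downward requirement) and that the extremal size profile is justified by a rearrangement argument.
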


\subsection{Points in general position} 
In this subsection, we consider point sets in general position in the plane and present lower and upper bounds for the geometric pseudo-Grundy index of complete geometric graphs. We consider the segment-intersection and the segment-crossing adjacency criteria. Recall that the geometric pseudo-Grundy index of a graph $G$ is defined as: 
$${\gamma'_g} (G) = min \{\gamma_1 (\mathsf{G}) : \mathsf{G} \text{ is a geometric graph of G}\}.$$

\subsubsection{Lower bound}

To achieve the lower bound, we use the following result. Consider $k$ sets of points in general position in the plane, $\{A_1, \ldots, A_k\}$. We say that a set of points $T$, of the form $T =\{p_1, \ldots, p_k\}$, is a \emph{transversal} of the $k$ sets $A_i$, if $p_1 \in A_1, \ldots, p_k \in A_k$. $T$ is in convex position if each $p_i$ is a vertex of the convex hull of $T$. The authors of~\cite{MR1755231} proved that every set of $n$ points in general position in the plane contains four subsets $A_1, \ldots, A_4$, each one with cardinality at least $\lfloor \frac{n}{20} \rfloor$, and such that every transversal of the sets $A_i$ is convex. To prove their result, the authors give a partition of the plane using a collection of lines that results in the convex sets of the desired cardinality. For a similar theorem, see~\cite{MR1608874}. 

\begin{theorem}\label{thm:traversal}(Nielsen and Sabo)
Every sufficiently large finite set $S$ of $n$ points in general position in the plane contains $4$ subsets $A_1$, $A_2$, $A_3$, $A_4$, each of cardinality at least $\lfloor \frac{n}{20}\rfloor$, such that every set $\{a_1, a_2, a_3, a_4\}$ with $a_1 \in A_1, a_2 \in A_2, a_3 \in A_3$, and $a_4 \in A_4$ is in convex position.
\end{theorem}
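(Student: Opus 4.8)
The plan is to realize the four sets as four \emph{directionally extreme} clusters and to certify convexity by the following elementary observation: if among four points each one is the \emph{unique} extreme point in one of the four axis directions --- topmost, bottommost, leftmost, rightmost --- then the four are in convex position. Indeed, a point that uniquely maximizes a linear functional is a vertex of the convex hull, and if all four points are hull vertices the quadruple is convex. With this certificate in hand it suffices to produce four pairwise disjoint sets $A_1,A_2,A_3,A_4$, each of size at least $\lfloor n/20\rfloor$, delimited by lines so that in every transversal the representative of $A_1$ is forced to be topmost, that of $A_2$ rightmost, that of $A_3$ bottommost, and that of $A_4$ leftmost.

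To force these extremalities with lines, I would cut the plane with two horizontal lines $h_2<h_1$ and two vertical lines $v_1<v_2$, and take $A_1$ in the top slab and central vertical strip (the cell $y>h_1$, $v_1<x<v_2$), $A_3$ in the bottom slab and central strip, $A_4$ in the left slab and central horizontal strip ($x<v_1$, $h_2<y<h_1$), and $A_2$ in the right slab and central horizontal strip. These are four pairwise disjoint cells of the $3\times 3$ grid induced by the four lines. The separating lines now enforce the order automatically: every point of $A_2,A_3,A_4$ lies below $h_1$, so the $A_1$-representative is the unique topmost point of any transversal, and symmetrically for the other three directions. Hence every transversal is convex by the certificate above, which realizes the line partition alluded to just before the theorem.

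The crux --- and the step I expect to be the main obstacle --- is positioning the four lines so that all four cells simultaneously contain at least $\lfloor n/20\rfloor$ points. The difficulty is a genuine circular dependency: the horizontal lines must leave the left and right slabs with many points in the central horizontal strip, while the vertical lines must leave many points in the central vertical strip above $h_1$ and below $h_2$. For a fixed frame an adversary can defeat this by concentrating the mass in the four corner cells, emptying all four target cells at once. I would resolve this by not fixing the frame in advance: first choose the orientation of the four cutting directions advantageously (a suitable rotation turns corner mass into edge mass), and then place the lines by a balancing argument in the spirit of the ham-sandwich theorem, bisecting nested subsets so that the central strips inherit a constant fraction of the mass. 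Keeping track of the fraction lost at each cut is exactly what produces the constant $\tfrac1{20}$, and driving all four cells above this threshold at the same time is the delicate part.

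As a fallback, or to organize the constants cleanly, I would lean on the positive-fraction viewpoint behind the Erd\H{o}s--Szekeres and same-type phenomena: partition $S$ by lines into a bounded number of pairwise separated parts, each of linear size, and then apply the pigeonhole principle to the finitely many order types of quadruples of parts to extract four parts all of whose transversals realize the convex type. In either route the points of $S$ not landing in the four chosen cells are simply discarded, and what remains are four subsets of the claimed size $\lfloor n/20\rfloor$ such that every transversal is in convex position, which is the assertion of the theorem.
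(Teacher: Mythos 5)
First, a point of reference: the paper does not prove this statement at all. It is quoted from Nielsen and Sabo~\cite{MR1755231}, with only the remark that their proof partitions the plane by a collection of lines. So your proposal has to stand on its own, and it has a genuine gap at exactly the step you yourself flag as ``the delicate part.'' Your convexity certificate is correct: if each point of a transversal uniquely maximizes one of four directions, all four points are hull vertices, hence in convex position; and your plus-sign cells (the four edge cells of the $3\times 3$ grid) do enforce this. The fatal problem is that the existence of a suitable frame is not merely delicate --- it is false. Let $S$ consist of $n$ points on a very flat convex arc, say $p_i=(i,\eta i^2/n^2)$ with $\eta$ arbitrarily small (a set for which the theorem is trivially true, since every quadruple is already convex). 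For any frame orientation bounded away from the two directions aligned with the arc, both frame coordinates are monotone along $S$ (the set lies in an arbitrarily thin strip), and then two of your four cells demand contradictory orderings: a top-center point must precede a right-center point in one coordinate and follow it in the other, so one of those cells is empty. For the remaining, nearly aligned orientations, the points in frame coordinates lie on a U-shaped (or sideways-U) curve, and convexity of that curve forces the same conflict: if some central-strip point lies above $h_1$, then all points of the left strip or all points of the right strip also lie above $h_1$, emptying a required cell. Hence \emph{no} rotation and \emph{no} placement of two pairs of parallel lines populates all four cells, and no ham-sandwich-style balancing can rescue a partition whose cells cannot simultaneously be nonempty. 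Your guiding intuition, that ``a suitable rotation turns corner mass into edge mass,'' is exactly what thin point sets refute.

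What the counterexample shows is that the four separating directions cannot form a rigid orthogonal frame chosen up to rotation; the lines and cells must be adapted to the point set (for the flat arc above, the correct four sets are consecutive sub-arcs, certified by four nearly parallel directions). Producing such an adaptive partition for an arbitrary set, and doing the bookkeeping that yields the specific fraction $\tfrac{1}{20}$, is precisely the content of Nielsen and Sabo's argument, and neither appears in your sketch. Your fallback via same-type/positive-fraction machinery is also only gestured at: plain pigeonhole over order types does not suffice (you need all transversals of a fixed quadruple of parts to have the \emph{same} type, which is the nontrivial same-type lemma, plus an Erd\H{o}s--Szekeres step over at least five parts to guarantee the common type is the convex one), and that route does not deliver the constant $\tfrac{1}{20}$ demanded by the statement.
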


Using the above theorem, we can establish the following result; we present a coloring of the edges of any $\mathsf{K_n}$.

\begin{theorem}
Let $K_n$ be a complete graph with $n$ vertices. Under both the segment crossing and the segment intersection criteria, the pseudo-Grundy geometric index of $K_n$ has the following lower bound.
$$\left \lfloor \frac{n^2}{400} \right \rfloor \leq \gamma'_g({K_n}).$$

\begin{proof}
To prove the result we construct a set of $\left \lfloor \frac{n^2}{400} \right \rfloor$ pairwise crossing edges.

Let $\mathsf{K_n}$ be a complete geometric graph of $n$ points in general position in the plane. Take the four subsets $A_1, A_2, A_3, A_4$ of $V(\mathsf{K_n})$ guaranteed by Theorem~\ref{thm:traversal}. Using these four subsets, we can always construct a set of edges $M$ and a set of edges $M'$ with the following characteristics: $M, M' \subseteq \bigcup_{1\leq i \leq 4}A_i$, $|M| = |M'| = \binom{\frac{n}{20}}{2}$, and such that every edge in $M$ intersects every edge in $M'$. 

Color the edges of $M$ with colors from $1$ to $\binom{\frac{n}{20}}{2}$ and repeat for $M'$. This coloring is Grundy, and we have assigned at least $\lfloor \frac{n^2}{400}\rfloor$ colors. Color the remaining edges using the greedy coloring algorithm. The result follows. 
\end{proof}
\end{theorem}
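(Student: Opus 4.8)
The plan is to exploit the definition $\gamma'_g(K_n)=\min\{\gamma'(\mathsf{G}):\mathsf{G}\text{ is a geometric graph of }K_n\}$. Since $\gamma'(\mathsf{G})$ is the \emph{maximum} number of colors over all pseudo-Grundy colorings of a fixed drawing, to bound the minimum from below it suffices to exhibit, for an \emph{arbitrary} point set in general position, one pseudo-Grundy coloring of $\mathsf{K_n}$ with at least $\lfloor n^2/400\rfloor$ colors. The whole argument is therefore constructive: I fix any $V(\mathsf{K_n})$ in general position and build the coloring from its geometry, so that no special drawing is assumed.

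First I would apply Theorem~\ref{thm:traversal} to extract four pairwise disjoint subsets $A_1,A_2,A_3,A_4\subseteq V(\mathsf{K_n})$, each of cardinality at least $\lfloor n/20\rfloor$, with every transversal in convex position. The geometric fact I need beyond bare convexity is that the \emph{cyclic order} in which the points of a transversal appear on its hull is the same for all transversals; this is inherited from the line-separated, convex arrangement of the blocks in the Nielsen--Sabo construction. Relabelling so that this common order is $A_1,A_2,A_3,A_4$, the two diagonals of each transversal quadrilateral join $A_1$ to $A_3$ and $A_2$ to $A_4$, and as diagonals of a convex quadrilateral they must cross. Hence, setting
$$M = \{\, e_{a_1,a_3} : a_1 \in A_1,\ a_3 \in A_3 \,\}, \qquad M' = \{\, e_{a_2,a_4} : a_2 \in A_2,\ a_4 \in A_4 \,\},$$
every edge of $M$ crosses every edge of $M'$; since the four blocks are disjoint these are genuine interior crossings with no shared endpoints, so the edges are adjacent under both the segment-crossing and the segment-intersection criteria. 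Each of $M$ and $M'$ has size at least $\lfloor n/20\rfloor^2$.

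Next I would color. Put $N=\lfloor n/20\rfloor^2$, list edges $m_1,\dots,m_N\in M$ and $m'_1,\dots,m'_N\in M'$, and give both $m_k$ and $m'_k$ the color $k$, so each color class is one crossing pair. To check the pseudo-Grundy condition, fix colors $i<j$: the edges of color $j$ are $m_j\in M$ and $m'_j\in M'$; as $m_j$ crosses all of $M'$ it is adjacent to $m'_i$ (color $i$), and as $m'_j$ crosses all of $M$ it is adjacent to $m_i$ (color $i$). Thus every edge of color $j$ has a neighbor of each smaller color, which is stronger than required. This uses $N\ge\lfloor n/20\rfloor^2=n^2/400-\Theta(n)$ colors, matching the claimed bound up to the lower-order slack coming from the floors; the remaining edges of $\mathsf{K_n}$ are then colored so as to preserve the property (for instance greedily, or simply all with color $1$), which cannot decrease the color count.

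The step I expect to be the main obstacle is precisely the claim that all transversals share one cyclic order, which is what makes the complete bipartite families $M$ (between $A_1,A_3$) and $M'$ (between $A_2,A_4$) cross \emph{pairwise}: convex position of each individual transversal alone does not determine which pair of diagonals crosses. I would settle this by appealing to the explicit partition of the plane by lines in the Nielsen--Sabo construction, which places the four blocks in convex position as sets and thereby fixes the order. Once that is in place, the counting and the verification of the pseudo-Grundy property are routine.
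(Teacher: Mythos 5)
Your proposal is correct and follows essentially the same route as the paper: both invoke the Nielsen--Sabo theorem, take $M$ and $M'$ to be the bipartite edge sets joining opposite blocks ($A_1$--$A_3$ and $A_2$--$A_4$), color each crossing pair $\{m_k,m'_k\}$ with a common color, and extend to the remaining edges greedily. If anything, your write-up supplies details the paper only asserts --- the correct count $\lfloor n/20\rfloor^2$ (the paper's $\binom{n/20}{2}$ is a slip that would not even reach $n^2/400$), the observation that a common cyclic order of the four blocks is genuinely needed (convexity of each individual transversal alone does not determine which diagonals cross) and must be drawn from the line-separated Nielsen--Sabo construction, and the explicit verification of the pseudo-Grundy condition.
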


\subsubsection{Upper bound} 

We obtain an upper bound for $\gamma'_g(K_n)$, for the segment intersection and the segment crossing criteria, using the following definitions and remarks.

Let $\mathsf{G}$ be a geometric graph and let $e$ be an edge of $\mathsf{G}$. We define the \emph{geometric edge-degree} of $e$ as the number of edges in $\mathsf{G}$ that intersect (share a common vertex or cross) $e$. We call the largest degree among the edges of $\mathsf{G}$ the \emph{maximum geometric edge-degree} of $\mathsf{G}$ and denote it by $\Delta_1(\mathsf{G})$. We define the \emph{crossing index} $\overline{cr}_1(G)$ of a graph $G$, as the smallest value $k$ for which there exists a geometric drawing $\mathsf{G}$ of $G$ such that $\Delta_1 (\mathsf{G}) = k$.

Let $\mathsf{K}_n$ be a complete geometric graph, defined using the segment intersection or the segment crossing criteria, and let $k$ be its Grundy number. Consider a Grundy coloring of the edges of $\mathsf{K}_n$, using $k$ colors, and let $e$ be an edge to which the color $k$ has been assigned. Since the coloring is a Grundy coloring, for every $i \in \{1, 2, \ldots, k-1\}$, there must be an edge with color $i$ intersecting $e$.
Thus $\Delta_1 (\mathsf{K}_n) \geq \gamma'(\mathsf{K}_n)-1$ and then
$$\Delta_1(\mathsf{K}_n)\geq\overline{cr}_1(K_n)\geq \gamma'_g(K_n)-1.$$

Two of the authors of this paper proposed the problem to Bernardo Ábrego and Silvia Fernández-Merchant, they observed that the parameter $\Delta_1(\mathsf{K}_n)$ is called in the literature the rectilinear local crossing number of $K_n$ and is denoted by $\overline{lcr}(K_n)$. They proved the following~\cite{MR3663492}.

\begin{theorem}(Ábrego and Fernández)
If $n$ is a positive integer, then 

\begin{equation*}
    \overline{lcr}(K_n) =
    \begin{cases}
        \frac{1}{9}(n-3)^2 & \text{ if } n \equiv 0 \pmod 3\\
        \frac{1}{9}(n-1)(n-4) & \text{ if } n \equiv 1 \pmod 3\\
        \frac{1}{9}(n-2)^2- \lfloor \frac{n-2}{6} \rfloor & \text{ if } n \equiv 2 \pmod 3, n \neq \{8,14\}\\
    \end{cases}
\end{equation*}

In addition, $\overline{lcr}(K_8) = 4$ and $\overline{lcr}(K_{14}) = 15$.
\end{theorem}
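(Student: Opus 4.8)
The quantity $\overline{lcr}(K_n)$ is, by definition, the minimum over all rectilinear drawings $\mathsf{D}$ of $K_n$ of the maximum number of crossings carried by a single edge of $\mathsf{D}$. The plan is to prove the formula by establishing matching upper (construction) and lower (counting) bounds. The starting point is a purely combinatorial reformulation of the number of crossings on a fixed edge: for a point set $S$ and an edge $uv$, a segment $ab$ with $a,b\in S\setminus\{u,v\}$ crosses $uv$ if and only if the four points $\{u,v,a,b\}$ are in convex position with $uv$ and $ab$ as the two diagonals. Hence, writing $c_S(uv)$ for the number of crossings on $uv$, we have that $c_S(uv)$ equals the number of convex quadrilaterals of $S$ having $uv$ as a diagonal, and the goal is to determine $\min_{S}\max_{uv} c_S(uv)$.

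For the upper bound I would exhibit an explicit configuration. The natural candidate, dictated by the three residue classes in the statement, is to split the $n$ points into three clusters of sizes as equal as possible (each $\lfloor n/3\rfloor$ or $\lceil n/3\rceil$), place the clusters near the vertices of a large triangle, and make each cluster a small convex arc whose orientation is chosen so that inter-cluster edges are spread out. One then classifies every edge as intra-cluster or inter-cluster and, using the reformulation above, counts the convex quadrilaterals through it; the dominant term comes from an inter-cluster edge crossed by other inter-cluster edges and is of order $(n/3)^2=n^2/9$. Carrying out this count exactly, and optimizing the cluster sizes and arc orientations according to $n\bmod 3$, should reproduce the main terms $\tfrac19(n-3)^2$, $\tfrac19(n-1)(n-4)$, $\tfrac19(n-2)^2$, together with, in the case $n\equiv 2$, the correction $-\lfloor\frac{n-2}{6}\rfloor$; the two sporadic values $\overline{lcr}(K_8)=4$ and $\overline{lcr}(K_{14})=15$ would be handled by separate ad hoc drawings and a finite check.

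The lower bound is the main obstacle. One must show that \emph{every} rectilinear drawing has an edge crossed at least the stated number of times. A global averaging argument is not enough: since each crossing lies on exactly two edges and each convex quadrilateral contributes one crossing, $\sum_{uv} c_S(uv)=2Q(S)$, where $Q(S)$ is the number of convex quadrilaterals of $S$, so $\max_{uv} c_S(uv)\ge 2Q(S)/\binom{n}{2}$; but $Q(S)$ can be as small as the rectilinear crossing number, which is only about $0.38\binom{n}{4}$, and this yields a bound of order $0.063\,n^2$, well below $n^2/9$. The difficulty is precisely that in a crossing-minimal drawing the crossings need not be concentrated, so the extremal edge must be located by a finer, localized analysis. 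I would attack this through the allowable- (circular-)sequence machinery of Goodman and Pollack, the same tool used in the exact determination of rectilinear crossing numbers: encode the drawing by its order types, express $c_S(uv)$ in terms of the $\le k$-edge counts of $S$ relative to the line $uv$, and show that the $\le k$-edge profile cannot simultaneously keep the count small on all edges. Matching the exact constant $1/9$ and the lower-order terms to the construction---rather than merely the asymptotic order---is where the real work lies, and is what forces the delicate parity corrections and the exceptional cases.
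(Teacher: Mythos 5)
This theorem is not proved in the paper at all: it is an external result of \'Abrego and Fern\'andez-Merchant, quoted with a citation and used as a black box to obtain the bound $\gamma'_g(K_n)\leq \frac{n^2}{9}+\Theta(n)$. So there is no internal proof to compare yours against; the only question is whether your attempt stands on its own as a proof, and it does not. Both halves of your plan are deferred rather than executed. For the upper bound you describe the right kind of construction (three nearly equal clusters placed at the corners of a triangle), but you never carry out the count: the exact values, the correction term $-\lfloor\frac{n-2}{6}\rfloor$ in the case $n\equiv 2\pmod 3$, and the sporadic values $\overline{lcr}(K_8)=4$ and $\overline{lcr}(K_{14})=15$ are precisely the parts that cannot be waved through with ``should reproduce''; the existence of exceptional cases is itself evidence that the naive three-cluster count does not close by itself.

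The more serious gap is the lower bound, which you yourself flag as ``the main obstacle'' and then do not address. Your observation that global averaging over edges is insufficient --- each crossing lies on two edges, so averaging gives only on the order of $0.06\,n^2$, since crossings in a drawing need not concentrate on few edges --- is correct and worth making. But after discarding that route you only name a toolbox (order types, circular sequences, $\le k$-edge counts) and assert that one must ``show that the $\le k$-edge profile cannot simultaneously keep the count small on all edges.'' That sentence is a restatement of the theorem, not an argument: it contains no mechanism for locating, in an arbitrary rectilinear drawing, an edge forced to carry $\approx n^2/9$ crossings, and no explanation of how the exact constant $1/9$ or the parity corrections would emerge from $\le k$-edge inequalities. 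Since the claim quantifies over all drawings of $K_n$, this is where the entire difficulty of the result lives, and no part of it is present. As it stands, your text is a reasonable research plan that correctly identifies the shape of the answer and the hardness of the lower bound, but it is not a proof of the statement.
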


This result directly implies our next theorem.
\begin{theorem} 

Let $K_n$ be a complete graph with $n$ vertices. Under both the segment crossing and the segment intersection criteria, the pseudo-Grundy geometric index of $K_n$ has the following upper bound.
$$\gamma'_g(K_n)\leq \frac{n^2}{9}+\Theta(n).$$
\end{theorem}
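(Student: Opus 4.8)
The plan is to read the bound directly off the inequality $\gamma'_g(K_n)\le\overline{cr}_1(K_n)+1$ that is implicit in the discussion preceding the theorem, and then to substitute the exact value of $\overline{cr}_1(K_n)=\overline{lcr}(K_n)$ supplied by the Ábrego--Fernández formula. There is essentially only one substitution to make.

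First I would recall why $\gamma'_g(K_n)\le\overline{cr}_1(K_n)+1$. For \emph{any} geometric drawing $\mathsf{G}$ of $K_n$, the argument just above the theorem shows $\gamma'(\mathsf{G})\le\Delta_1(\mathsf{G})+1$: an edge $e$ carrying the top color of a pseudo-Grundy $\gamma'(\mathsf{G})$-coloring must be intersected by an edge of each smaller color, so its geometric edge-degree is at least $\gamma'(\mathsf{G})-1$. Applying this to the drawing $\mathsf{G}^\ast$ that realizes $\overline{cr}_1(K_n)=\Delta_1(\mathsf{G}^\ast)$, and using $\gamma'_g(K_n)\le\gamma'(\mathsf{G}^\ast)$ from the definition of the geometric index, gives $\gamma'_g(K_n)\le\gamma'(\mathsf{G}^\ast)\le\Delta_1(\mathsf{G}^\ast)+1=\overline{cr}_1(K_n)+1$.

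Next I would identify $\overline{cr}_1(K_n)$ with the rectilinear local crossing number. Under the segment-crossing criterion the geometric edge-degree of an edge is exactly the number of edges crossing it, so $\Delta_1(\mathsf{G})$ is the maximum local crossing number of $\mathsf{G}$ and $\overline{cr}_1(K_n)=\overline{lcr}(K_n)$. Under the segment-intersection criterion each edge of $K_n$ additionally meets the $2(n-2)$ edges sharing one of its two endpoints; since this count is the same for every edge and every drawing, the $\Delta_1$-minimizing drawing is unchanged and $\overline{cr}_1(K_n)=\overline{lcr}(K_n)+2(n-2)$, differing from the crossing case only by a linear term. Finally I would substitute the Ábrego--Fernández value: in each residue class modulo $3$ it equals $\tfrac19 n^2+\Theta(n)$, so $\overline{lcr}(K_n)=\tfrac{n^2}{9}+\Theta(n)$, and plugging into $\gamma'_g(K_n)\le\overline{cr}_1(K_n)+1$ yields $\gamma'_g(K_n)\le\tfrac{n^2}{9}+\Theta(n)$ under both criteria.

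Since every ingredient is already in place, I do not expect a genuine obstacle; the proof is close to a one-line deduction. The only point demanding care is the correct translation of the two adjacency criteria into the crossing parameter: one must remember that the intersection criterion contributes the fixed linear term $2(n-2)$ from shared endpoints on top of the crossings, which leaves the quadratic leading coefficient $\tfrac19$ untouched but must be folded into the $\Theta(n)$ error term.
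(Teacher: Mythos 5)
Your proposal is correct and follows essentially the same route as the paper: the inequality $\gamma'_g(K_n)\leq \overline{cr}_1(K_n)+1$ established in the discussion preceding the theorem, combined with the Ábrego--Fernández formula $\overline{lcr}(K_n)=\frac{n^2}{9}+\Theta(n)$. Your explicit handling of the $2(n-2)$ shared-endpoint incidences under the intersection criterion is a point the paper glosses over, but it only affects the $\Theta(n)$ term and does not change the argument.
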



\section{Segment Disjointness} \label{sec:disjoint}

In this section, we consider complete geometric graphs defined using the adjacency criterion in which two edges are adjacent if they are disjoint. We say that two edges are disjoint if they do not intersect. Under this criterion, the cliques of the graph are plane matchings and its independent edge sets are thrackles. In what follows, we present upper and lower bounds for the geometric pseudo-Grundy index of points in general position.

Let $\mathsf{G}$ be a geometric graph of order $n$ defined by the disjointness criterion. There are at most $\frac{n}{2}$ chromatic classes of size one in any complete coloring of $\mathsf{G}$. In particular, there are at most $\frac{n}{2}$ chromatic classes of size one in any Grundy coloring of $\mathsf{G}$. From this fact, we can get the following result for the geometric pseudo-Grundy index of the complete graph. We omit the proof because it is similar to that of Theorem~\ref{thm:upper_bound}.

\begin{theorem}
Let $n\geq 4$ be a natural number, the geometric pseudo-Grundy index of the complete graph can be bounded from above as follows
\[\gamma_g'(K_n)\leq \frac{n^2}{4}+\theta(n).\]
\end{theorem}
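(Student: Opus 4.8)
The plan is to mirror the structure of the proof of Theorem~\ref{thm:upper_bound}: first bound the number of singleton chromatic classes, then assume every remaining class has size at least two, and finally count. The crucial difference lies in the singleton bound. Under the disjointness criterion a singleton class is a single edge $e$ which, for the coloring to be complete, must be disjoint from at least one edge of every other color; in particular, the edges underlying distinct singleton classes must be pairwise disjoint. A set of pairwise disjoint straight-line edges on $n$ points is precisely a plane matching, and a plane matching has at most $\lfloor n/2\rfloor$ edges. This is the disjointness analogue of Corollary~\ref{cor} and is exactly the fact quoted immediately before the statement, so I would first record its short proof.

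Having fixed $s\le\lfloor n/2\rfloor$ as the number of singleton classes, I would argue, exactly as in Theorem~\ref{thm:upper_bound}, that the configuration permitting the largest number of colors $\gamma$ is the one in which every non-singleton class has the smallest admissible size, namely two. I would then count edges rather than incidences: writing $\mathcal C_1,\dots,\mathcal C_\gamma$ for the classes,
\[
\binom{n}{2}=\sum_{c=1}^{\gamma}|\mathcal C_c|\ \ge\ s\cdot 1+(\gamma-s)\cdot 2=2\gamma-s .
\]
Solving for $\gamma$ and using $s\le\lfloor n/2\rfloor$ gives $\gamma\le\tfrac12\big(\binom{n}{2}+\lfloor n/2\rfloor\big)=\tfrac{n^2}{4}+\theta(n)$. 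Since this inequality holds for every geometric drawing of $K_n$, it bounds in particular the minimum over drawings, which is $\gamma'_g(K_n)$.

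One point worth flagging is that I would deliberately \emph{not} reproduce the finer incidence count of Theorem~\ref{thm:upper_bound} here. In that theorem the inequality $\binom{\gamma}{2}+\binom{\gamma-n}{2}\le cr$ beat the crude edge count because the relevant number of adjacent pairs was $cr=\binom{n}{4}$. Under the disjointness criterion the number of disjoint (hence adjacent) pairs is of order $2\binom{n}{4}\sim n^4/12$, and feeding this into the analogous incidence inequality would yield only $\gamma\lesssim n^2/\sqrt{12}$, which is weaker than $n^2/4$. Thus the main obstacle is not any hard computation but recognizing which of the two counting arguments is binding: the direct class/edge count, governed by the $\lfloor n/2\rfloor$ singleton bound, is the one that yields the claimed $\tfrac{n^2}{4}+\theta(n)$, whereas the incidence refinement is strictly weaker and should be set aside.
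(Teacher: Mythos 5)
Your proof is correct, and the bound it produces is exactly the stated one; but it is worth spelling out how it relates to what the paper actually indicates. The paper omits the proof of this theorem, saying only that it is ``similar to that of Theorem~\ref{thm:upper_bound}'', whose argument is an \emph{incidence} count: at most $s$ singleton classes, and then $\binom{\gamma}{2}+\binom{\gamma-s}{2}\le$ (number of adjacent pairs of edges). Your closing remark is exactly right: under the disjointness criterion the number of adjacent (i.e.\ disjoint) pairs is minimized by convex position at $2\binom{n}{4}\sim n^4/12$, so this incidence count can only give $\gamma\lesssim n^2/\sqrt{12}\approx 0.289\,n^2$, which does not reach the claimed $n^2/4$. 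What you wrote instead --- bound the singletons by the maximum clique size $\lfloor n/2\rfloor$ (a clique here is a plane matching), then count \emph{edges} via $\binom{n}{2}\ge s+2(\gamma-s)$ and maximize over $s\le\lfloor n/2\rfloor$ --- is precisely the argument the paper itself uses for the analogous upper bound under the non-crossing criterion in Section~\ref{sec:non-crossing} (with $3n-6$ in place of $\lfloor n/2\rfloor$). So your route is not merely a valid alternative: it is the counting argument that actually delivers the stated constant, and your observation exposes that the paper's pointer to Theorem~\ref{thm:upper_bound}, taken literally, would yield a strictly weaker bound; the honest cross-reference for this theorem is the edge-count proof of Section~\ref{sec:non-crossing}.
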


Before presenting the lower bound, we recall the definition of halving-line. If $P$ is a set of $n$ points in general position in the plane, when $n$ is even a \emph{halving-line} of $P$ is a line through two points of $P$ leaving $(n-2)/2$ points of $P$ on each side; when $n$ is odd, a halving-line leaves $(n-3)/2$ and $(n-1)/2$ points on each side. Any set of points in general position in the plane has at least one halving-line.

\begin{theorem}
Let $n\geq 4$ be an integer, then the geometric pseudo-Grundy index of the complete graph can be bounded from below as follows
\[\gamma'_g(K_n)\geq \frac{n^2}{8}-\theta(n).\]
\begin{proof}
Let $\mathsf{K}_n$ be a complete geometric graph, and let $S$ be its vertex set. Consider a halving-line $\ell$ of $S$, without loss of generality, suppose that there are equal or more points to the right of $\ell$ than to the left; let this number of points be $k$. Color the edges to the right of $\ell$ using colors $\{1, \ldots, {k \choose 2}\}$, and color the edges to the left using the same set of colors (minus $k-1$ if $n$ is odd); one color for each edge. Consider some edge colored with color $j$, call this edge $e$, and suppose that both of its vertices are to the left of $\ell$; for every color $i$, between $1$ and $j-1$, there is an edge colored $i$ whose vertices are to the right of $\ell$ and therefore that is disjoint to $e$. This partial coloring is pseudo-Grundy; we are using at least $(n-3)(n-1)/8$ colors. The rest of the edges are colored using the greedy-coloring algorithm. 
\end{proof}
\end{theorem}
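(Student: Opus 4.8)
The plan is to lower-bound $\gamma'_g(K_n)$ under the disjointness criterion by constructing, for a suitable drawing, a large pseudo-Grundy coloring. The key idea is to exploit the fact that two edges whose four endpoints lie on opposite sides of a line are automatically disjoint. First I would fix an arbitrary geometric drawing $\mathsf{K}_n$ with vertex set $S$ and invoke the existence of a halving-line $\ell$, which is guaranteed for any point set in general position. Splitting $S$ along $\ell$ yields a left part $L$ and a right part $R$ with roughly $n/2$ points each; taking $k$ to be the size of the larger part, so $k \geq \lfloor n/2 \rfloor$, there are $\binom{k}{2}$ edges contained entirely on one side.

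The core of the argument is the observation that any edge with both endpoints in $L$ is disjoint from any edge with both endpoints in $R$, since the two segments are separated by $\ell$. This lets me run the same color palette on both sides independently: I would assign the colors $\{1,\ldots,\binom{k}{2}\}$ bijectively to the edges lying on the larger side, and reuse this same palette (truncating appropriately, and discarding the $k-1$ highest colors when $n$ is odd to account for the smaller side having $\binom{k-1}{2}$ edges) on the edges of the smaller side. To verify the pseudo-Grundy property, I would take any edge $e$ with color $j$, say with both endpoints on one side of $\ell$, and for each color $i < j$ point to the edge of color $i$ on the \emph{opposite} side; by the cross-separation observation that edge is disjoint from $e$, i.e. adjacent under the disjointness criterion. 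This establishes that every smaller color appears among the neighbors of $e$, which is exactly the pseudo-Grundy condition. The uncolored edges (those crossing $\ell$) can then be handled by the greedy algorithm, which only adds colors and preserves the pseudo-Grundy property.

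Finally I would count: using $\binom{k}{2}$ colors with $k \geq (n-1)/2$ in the odd case gives at least $\binom{(n-1)/2}{2} = (n-1)(n-3)/8$ distinct colors, which is $\tfrac{n^2}{8} - \theta(n)$, yielding the claimed bound. The main subtlety, rather than a deep obstacle, is the bookkeeping needed to make the palette-reuse genuinely well-defined: one must ensure that every color index $i$ that is supposed to witness the pseudo-Grundy property for an edge $e$ actually occurs on the side \emph{opposite} to the one containing $e$. This is why the smaller side is colored with a subset of the palette rather than an arbitrary recoloring — I would take care that the colors used on the smaller side form an initial segment (or, for odd $n$, the palette minus its top $k-1$ values) so that for any edge $e$ of color $j$ on either side, all colors below $j$ are represented on the other side. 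Since this construction works for some drawing (indeed for every drawing, given a halving-line), it certifies the geometric lower bound, because $\gamma'_g$ is the minimum over drawings and we have exhibited a coloring valid for an arbitrary one.
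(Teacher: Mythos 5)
Your proposal follows essentially the same route as the paper's own proof: fix a drawing, take a halving-line $\ell$, color the edges on the two sides with mirrored copies of the same palette so that the line-separation certifies the pseudo-Grundy property, complete with the greedy algorithm, and count $\binom{k}{2}\geq (n-1)(n-3)/8$ colors. One shared caveat (present in the paper's proof as well, which only verifies edges on the smaller side): for odd $n$ your claim that ``all colors below $j$ are represented on the other side'' fails for an edge on the larger side whose color lies among the top $k-1$ values, since those colors have no counterpart across $\ell$; this affects only $O(n)$ colors and so does not change the stated bound $\gamma'_g(K_n)\geq \frac{n^2}{8}-\theta(n)$.
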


\section{Segment Non-Crossing}\label{sec:non-crossing}
In this last section, we consider complete geometric graphs defined by using the adjacency criterion in which two edges are adjacent if they do not cross. Under this criterion, the cliques of the graph are plane graphs and its independent edge sets are crossing families. 


\begin{theorem}
Let $K_n$ be the complete graph with $n\geq 3$ vertices. If we consider the non-crossing criterion, then the pseudo-Grundy geometric index of $K_n$ is bounded from above by
\[\gamma'_g(K_n)\leq \frac{n^2}{4}+\theta(n).\]
\end{theorem}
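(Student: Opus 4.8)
\section*{Proof proposal}

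The plan is to bound $\gamma'(\mathsf{G})$ \emph{uniformly} over all geometric realizations $\mathsf{G}$ of $K_n$ by a simple size-counting argument, mirroring the role that Corollary~\ref{cor} plays in the intersection and crossing cases. Since $\gamma'_g(K_n)=\min_{\mathsf G}\gamma'(\mathsf G)$, any bound that holds for every drawing $\mathsf{G}$ immediately bounds $\gamma'_g(K_n)$, so it suffices to prove the estimate for an arbitrary $\mathsf{G}$.

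First I would establish the non-crossing analogue of Corollary~\ref{cor}: any pseudo-Grundy (indeed any complete) coloring of $\mathsf{G}$ has at most $3n-6$ color classes of size one. The reason is that if $\{e\}$ and $\{f\}$ are two singleton classes with colors $i<j$, then the pseudo-Grundy condition forces the unique $j$-colored edge $f$ to be adjacent to some $i$-colored edge, which can only be $e$; hence $e$ and $f$ are adjacent under the non-crossing criterion, i.e.\ they do not cross. Thus the edges representing singleton classes are pairwise non-crossing, so they form a clique of the non-crossing graph, which as noted at the start of this section is a plane straight-line graph on (at most) $n$ vertices. By the standard planarity bound such a graph has at most $3n-6$ edges, so there are at most $3n-6$ singleton classes. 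I would then combine this with the trivial edge count: writing $\gamma=\gamma'(\mathsf G)$ and letting $s\le 3n-6$ be the number of singleton classes, each of the remaining $\gamma-s$ classes contains at least two edges, whence $\binom{n}{2}=\sum_c|C_c|\ge s+2(\gamma-s)=2\gamma-s$. Rearranging gives $\gamma\le\tfrac12\bigl(\binom{n}{2}+s\bigr)\le\tfrac12\bigl(\binom{n}{2}+3n-6\bigr)=\tfrac{n^2}{4}+\theta(n)$, and since this holds for every $\mathsf G$ it holds for the minimizing drawing, yielding the claim.

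I expect the only real content to be the singleton bound, and in particular the observation that one should use the planarity estimate $3n-6$ together with the crude size count rather than the incidence-counting scheme of Theorem~\ref{thm:upper_bound}. Running the latter here, bounding $\binom{\gamma}{2}+\binom{\gamma-(3n-6)}{2}$ by the number of non-crossing pairs of a crossing-maximizing (convex) drawing, only yields a leading constant of order $1/\sqrt{12}>1/4$; it is precisely the simpler counting of class \emph{sizes} that produces the sharp coefficient $n^2/4$. The remaining steps are the routine arithmetic above and the standard fact that a plane straight-line graph on $n\ge 3$ vertices has at most $3n-6$ edges.
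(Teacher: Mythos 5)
Your proposal is correct and follows essentially the same route as the paper: bound the number of singleton color classes by $3n-6$ via planarity of pairwise non-crossing edges, then use the size count $\binom{n}{2}\ge s+2(\gamma-s)$ to get $\gamma\le\frac{1}{2}\bigl(\binom{n}{2}+3n-6\bigr)=\frac{n(n+5)}{4}-3$. The only difference is presentational: you spell out why singleton classes must be pairwise adjacent, a step the paper leaves implicit.
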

\begin{proof}
Let $\mathsf{K}_n$ be a complete geometric graph. Any complete coloring of $\mathsf{K}_n$ has at most $3n-6$ classes of size one (the number of edges in a maximal plane graph), then \[\gamma'_g(K_n)\leq \frac{\tbinom{n}{2}-(3n-6)}{2}+3n-6= \frac{n(n+5)}{4}-3,\]
and the result follows.
\end{proof}

For the Grundy index of complete graphs, we prove $\frac{n^2}{6}+\Theta(n)\leq \Gamma'_g(K_n)$, using
the following lemma and another result from the literature. 

\begin{lemma}\label{lem:triangle-edge}
Let $S$ be a set of points in general position in the plane. Let $T$ be any triangle with vertices in $S$, and $e$ be any segment that connects two points in $S$. Choose $T$ and $e$ so that they do not have vertices in common. $T$ contains at least one edge disjoint to $e$.
\begin{proof}
    The line expanded by the edge $e$ divides the plane into two half-planes; one half-plane contains exactly one vertex of $T$. The degree of this vertex is exactly two; therefore, there are at most two edges of $T$ that cross $e$.
\end{proof}
\end{lemma}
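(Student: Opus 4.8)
The plan is to let $\ell$ be the line supporting the segment $e$ (the line through the two endpoints of $e$) and to use $\ell$ to control how many edges of $T$ can possibly meet $e$. First I would record a consequence of general position: no vertex of $T$ can lie on $\ell$. Indeed, a vertex of $T$ lying on $\ell$ would be a point of $S$ collinear with the two endpoints of $e$ (which are also points of $S$, distinct from the vertices of $T$ by hypothesis), contradicting the assumption that $S$ is in general position. Hence each of the three vertices of $T$ lies strictly inside one of the two open half-planes bounded by $\ell$.

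Next I would distribute the three vertices of $T$ between the two open half-planes. By the pigeonhole principle the split is either $3$--$0$ or $2$--$1$. In the $3$--$0$ case all of $T$ lies in a single open half-plane, so no edge of $T$ meets $\ell$ at all; in particular every edge of $T$ is disjoint from $e$, and the conclusion holds trivially (in fact all three edges work). The substantive case is $2$--$1$, where one half-plane contains a single vertex $v$ of $T$ and the other contains the remaining two vertices.

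In the $2$--$1$ case I would single out the edge $f$ of $T$ opposite $v$, that is, the edge joining the two vertices lying on the same side of $\ell$. Since both endpoints of $f$ belong to the same open half-plane, $f$ does not cross $\ell$, and therefore $f$ cannot cross the segment $e \subseteq \ell$. As $T$ and $e$ share no vertex, ``not crossing'' coincides with ``disjoint'', so $f$ is the required edge. Phrased the way the author does: only the two edges incident to $v$ have endpoints on opposite sides of $\ell$, so at most those two edges of $T$ can cross $e$, leaving at least one of the three edges disjoint from $e$.

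I do not anticipate a genuine obstacle here, so the ``hard part'' is merely making two easy observations precise: first, using general position to exclude a vertex of $T$ on $\ell$ (so the pigeonhole dichotomy is clean), and second, noting that crossing the segment $e$ forces crossing its supporting line $\ell$, so confining an edge to a single half-plane of $\ell$ already guarantees disjointness from $e$.
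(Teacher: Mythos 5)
Your proposal is correct and follows essentially the same approach as the paper: use the line supporting $e$ to split the vertices of $T$ between half-planes, and observe that only the (at most) two edges incident to an isolated vertex can cross $e$, leaving at least one edge disjoint. Your write-up is in fact slightly more careful than the paper's, since you explicitly handle the $3$--$0$ split and use general position to rule out a vertex of $T$ on the supporting line, both of which the paper glosses over.
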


We draw the reader's attention to the following result on graph decompositions; see \cite{MR2071903,MR382030}. Table~\ref{tab:partition} illustrates the different cases in the theorem.

\begin{theorem}(H. Hanani)\label{thm:partition}
    Let $K_n$ be the complete graph with $n$ vertices. There exists a set $F \subset E(K_n)$ such that $E(K_n) \setminus F$ can be partitioned into triangles. More precisely, if $n\equiv 1, 3 \pmod 6$, then $F = \emptyset$, if $n\equiv 0, 2 \pmod 6$, then $F$ induces a perfect matching in $K_n$, if $n\equiv 4 \pmod 6$, then $F$ induces a spanning forest of $n/2 + 1$ edges in $K_n$ with all vertices having an odd degree (a tripole), and if $n\equiv 5 \pmod 6$, then $F$ induces a $4$-cycle. 
\end{theorem}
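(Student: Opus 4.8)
The plan is to read the statement as the determination of the minimal \emph{leave} $F$ in a triangle decomposition of $K_n$, and to prove it in two stages: the \emph{necessary conditions}, which already force the congruence classes and the shape of $F$, and \emph{sufficiency}, which is where the real work lies and which I would reduce to the existence of Steiner triple systems. For the necessary direction I would observe that in any partition of $E(K_n)\setminus F$ into triangles, each triangle contributes $2$ to the degree of each of its three vertices and covers exactly $3$ edges; hence every vertex must have even degree in $K_n\setminus F$ and $3$ must divide $\binom{n}{2}-|F|$. Since $\deg_{K_n}(v)=n-1$, when $n$ is odd every degree is already even, so $F$ must itself have all even degrees, while when $n$ is even every vertex of $K_n\setminus F$ has even degree exactly when $F$ has all odd degrees (in particular $F$ then contains a perfect matching). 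Intersecting these parity requirements with the divisibility $3\mid\binom{n}{2}-|F|$ and minimizing $|F|$ pins down the four cases precisely: $F=\emptyset$ for $n\equiv 1,3$; a perfect matching ($n/2$ edges) for $n\equiv 0,2$; an all-odd-degree spanning forest on $n/2+1$ edges, i.e.\ a tripole, for $n\equiv 4$; and an even-degree subgraph on $4$ edges, i.e.\ a $4$-cycle, for $n\equiv 5$. This reduces the theorem to exhibiting, in each class, a decomposition that \emph{attains} the corresponding leave.

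For sufficiency I would dispose of the easy classes first. The case $n\equiv 1,3\pmod 6$ is exactly the existence of a Steiner triple system of order $n$, which I would establish by the classical direct constructions: the Bose construction (on an idempotent commutative quasigroup of order $n/3$) for $n\equiv 3$, and the Skolem construction for $n\equiv 1$; each gives an explicit partition of $E(K_n)$ into triangles with $F=\emptyset$. For $n\equiv 0,2\pmod 6$ I would exploit that $n+1\equiv 1,3\pmod 6$: take a Steiner triple system on $n+1$ points and delete one point $\infty$. The $n/2$ triples through $\infty$ each lose a vertex and together become a perfect matching on the remaining $n$ points, while every other triple survives as a triangle. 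This yields the decomposition with $F$ a perfect matching, exactly as claimed.

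The remaining classes $n\equiv 4,5\pmod 6$ are where I expect the difficulty to concentrate, because naive point deletion overshoots: deleting $k$ points from a triple system leaves a union of $k$ near-matchings, which has far too many edges and the wrong parity to be the minimal leave. The plan here is instead to start from a \emph{resolvable} Steiner triple system (a Kirkman system) of the nearest admissible order $v\equiv 3\pmod 6$, remove a single parallel class to free a one-factor's worth of edges, adjoin one new point (for $n\equiv 4$) or two new points (for $n\equiv 5$), and then re-triangulate the freed edges together with the edges incident to the new point(s). The delicate part, and the main obstacle, is the bookkeeping of this re-triangulation: each freed edge and each new edge must be covered exactly once, and the unavoidable parity-and-divisibility deficiency introduced by the added point(s) must be made to collapse onto \emph{precisely} a tripole or a $4$-cycle and nowhere else. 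I would settle the finitely many small base orders by explicit constructions and obtain the rest from the recursive triple-system constructions, with the verification that the leave lands exactly on the prescribed $F$ in every residue class being the step that demands the most care.
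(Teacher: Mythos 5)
Your necessity analysis is correct and complete: the parity argument (every vertex of $K_n\setminus F$ must have even degree, so $F$ has all degrees even when $n$ is odd and all degrees odd when $n$ is even) together with $3\mid\binom{n}{2}-|F|$ does force exactly the four shapes in the statement, and your constructions for the easy classes are sound --- Steiner triple systems via Bose/Skolem give $n\equiv 1,3\pmod 6$, and deleting one point from an $STS(n+1)$ correctly turns the $n/2$ triples through the deleted point into a perfect matching, settling $n\equiv 0,2\pmod 6$. For what it is worth, the paper itself offers no proof at all: it quotes the result as Hanani's theorem with citations to the design-theory literature, so you were never competing against an in-paper argument.

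However, your proof has a genuine gap precisely where you say it does: the cases $n\equiv 4,5\pmod 6$ are announced but not proved, and the ``bookkeeping of the re-triangulation'' you defer is the entire mathematical content of the theorem in those residue classes. Worse, the specific plan as sketched runs into a concrete obstruction. If you take a Kirkman system of order $n-1\equiv 3\pmod 6$, sacrifice one parallel class, and adjoin a new point $\infty$, then each sacrificed triple $\{a,b,c\}$ together with the edges $\infty a,\infty b,\infty c$ forms a $K_4$, and $K_4$ admits no triangle decomposition (all its degrees are odd); so each of the $(n-1)/3$ triples of the parallel class locally produces leftover edges, and collapsing this scattered excess onto a single tripole (or, after adding two points with their mutual edge, onto a single $4$-cycle) requires a global rearrangement that your outline does not supply. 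This is exactly why the known treatments (Hanani's original argument, or the maximum-packing constructions of Sch\"onheim and Spencer) need dedicated constructions for these classes rather than local surgery on a resolvable system. Two smaller remarks: invoking Kirkman systems at all imports the Ray-Chaudhuri--Wilson theorem, machinery far heavier than the statement needs; and note that the theorem only asserts the \emph{existence} of decompositions with these leaves, so your minimization framing, while a nice bonus, obligates you to nothing --- but the existence half for $n\equiv 4,5\pmod 6$ still must be carried out in full, and as written it is not.
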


\begin{table}
    \centering
\begin{tabular}{ c | c | c | c}
  $n \equiv 1 \text{ or } 3 \pmod 6$ & $0 \text{ or } 2 \pmod 6$ & $5 \pmod 6$ & $4 \pmod 6$\\
  \hline
  \hline
Empty set
& 
\begin{tikzpicture}[baseline=(current bounding box.center)]
\node at (0.5,-0.5) {Perfect matching};
\def \n {3}
\foreach \s in {0,...,\n}
{
\draw[fill=black] (0,\s) circle (1.5pt);
\draw[fill=black] (1,\s) circle (1.5pt);
\draw[thick] (0,\s) -- (1,\s);
}
\draw[fill=black] (0.5,1.5) circle (1.0pt);
\draw[fill=black] (0.5,1.7) circle (1.0pt);
\draw[fill=black] (0.5,1.3) circle (1.0pt);
\end{tikzpicture}
& 
\begin{tikzpicture}[baseline=(current bounding box.center)]
\node at (0.5,-0.5) {$4$-cycle};
\draw[fill=black] (0,0) circle (1.5pt);
\draw[fill=black] (0,1) circle (1.5pt);
\draw[fill=black] (1,0) circle (1.5pt);
\draw[fill=black] (1,1) circle (1.5pt);
\draw[thick] (0,0) -- (0,1);
\draw[thick] (0,1) -- (1,1);
\draw[thick] (1,1) -- (1,0);
\draw[thick] (1,0) -- (0,0);
\end{tikzpicture}
& 
\begin{tikzpicture}[baseline=(current bounding box.center)]
\node at (0.5,-0.5) {Tripole};
\def \n {3}
\draw[fill=black] (0.5,5) circle (1.5pt);
\draw[fill=black] (0.5,4) circle (1.5pt);
\draw[fill=black] (0,4) circle (1.5pt);
\draw[fill=black] (1,4) circle (1.5pt);
\draw[thick] (0.5,5) -- (0.5,4);
\draw[thick] (0.5,5) -- (0,4);
\draw[thick] (0.5,5) -- (1,4);
\foreach \s in {0,...,\n}
{
\draw[fill=black] (0,\s) circle (1.5pt);
\draw[fill=black] (1,\s) circle (1.5pt);
\draw[thick] (0,\s) -- (1,\s);
}
\draw[fill=black] (0.5,1.5) circle (1.0pt);
\draw[fill=black] (0.5,1.7) circle (1.0pt);
\draw[fill=black] (0.5,1.3) circle (1.0pt);
\end{tikzpicture}
\\
\hline
\end{tabular}
    \caption{The four different cases of Theorem~\ref{thm:partition}.}
    \label{tab:partition}
\end{table}

Now we are ready to present our lower bound.

\begin{theorem}\label{teo:remainder} 
Let $K_n$ be the complete graph, under the non-crossing criteria the Grundy geometric index of $K_n$ has the following lower bound,
\[\frac{n^2}{6}+\Theta(n)\leq \Gamma'_g(K_n)\]
\end{theorem}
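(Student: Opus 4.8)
The plan is to exploit the fact that the geometric Grundy index is a minimum over all straight-line drawings: to prove the lower bound it suffices to show that \emph{every} drawing $\mathsf{K}_n$ admits a proper Grundy coloring (under the non-crossing criterion) using at least $\tfrac{n^2}{6}-O(n)$ colors. I first record what such a coloring looks like. Since two edges are adjacent exactly when they do not cross, a color class in a proper coloring is a set of pairwise crossing edges (a crossing family, and in particular a matching, since crossing segments are vertex-disjoint), while the Grundy condition demands that every edge of a color $j$ share a vertex with, or be disjoint from, at least one edge of each color $i<j$. Independently of the drawing, I would apply Hanani's decomposition (Theorem~\ref{thm:partition}) to partition $E(K_n)\setminus F$, with $|F|=O(n)$, into $t=\tfrac13\binom n2=\tfrac{n^2}{6}-O(n)$ triangles $T_1,\dots,T_t$; this count already matches the target up to lower-order terms, so the goal becomes to realize essentially one color per triangle.

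The engine of the argument is Lemma~\ref{lem:triangle-edge}: for any triangle $T_c$ and any edge $e$ not meeting its vertices, some edge of $T_c$ is disjoint from $e$, hence non-crossing with $e$. In the idealized situation in which all three edges of a triangle carry a common color, this is exactly the domination the Grundy property requires at level $c$: processing the triangles as $T_1,\dots,T_t$ and devoting color $c$ to $T_c$, every edge of color larger than $c$ would, by the lemma, be non-crossing with an edge of $T_c$ and therefore see color $c$. This already produces a complete coloring with $t=\tfrac{n^2}{6}-O(n)$ colors meeting the Grundy condition. The few edges of the leftover set $F$, together with any edge sharing a vertex with its triangle (and thus excluded from the hypothesis of Lemma~\ref{lem:triangle-edge}, though non-crossing for a trivial reason), I would handle by the greedy algorithm, which, as recalled in the excerpt, always outputs a proper, complete, Grundy coloring and can only increase the number of colors, absorbing these exceptions into the $O(n)$ error term.

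The crux, and the step I expect to be the main obstacle, is reconciling this triangle-based domination with \emph{properness}, which is what separates the Grundy index $\Gamma'_g$ from the easier pseudo-Grundy index. The lemma is tailored to a monochromatic triangle, but the three edges of $T_c$ pairwise share a vertex and are therefore pairwise adjacent under the non-crossing criterion, so they cannot all receive color $c$: every color class must be a crossing family. Hence each triangle must have its three edges split among distinct classes, and one must guarantee that, after the splitting, every color $i$ still retains an $i$-colored edge non-crossing with each higher-colored edge. I would address this by fixing for each triangle a representative edge that carries its new color and recycling the other two edges into previously opened colors chosen so as to preserve both the crossing-family structure of the classes and the domination witnesses, finishing with the greedy completion to repair residual conflicts. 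Verifying that this recycling can always be performed while keeping each color robustly dominated by all lower colors—so that the final coloring is genuinely proper and Grundy rather than merely pseudo-Grundy—is where the real work lies, and it is precisely here that Lemma~\ref{lem:triangle-edge}, applied across many triangles simultaneously, must be brought to bear.
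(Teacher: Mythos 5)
Your construction is the same as the paper's: apply Hanani's decomposition (Theorem~\ref{thm:partition}), give each triangle of $E(K_n)\setminus F$ its own color, invoke Lemma~\ref{lem:triangle-edge} (plus the trivial case of an edge sharing a vertex with the triangle) to obtain the domination property, and absorb $F$ into lower-order terms. The only real difference is the treatment of $F$: the paper chooses the labelling of the points so that $F$ is drawn without crossings, which makes $F$ a clique under the non-crossing adjacency, and then gives its edges the topmost $|F|$ colors, whereas you hand $F$ to the greedy algorithm; both variants work for the count $\tfrac{n^2}{6}-O(n)$, and both arguments apply to an arbitrary point set, which is what the minimum in the definition of the geometric index requires.

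The crux you flag in your last paragraph --- that the three edges of a triangle are pairwise adjacent under the non-crossing criterion, so the monochromatic-triangle coloring cannot be proper and therefore only witnesses a \emph{pseudo}-Grundy coloring --- is entirely correct, and since your proposed repair (one representative edge per triangle, recycling the other two into older classes) is left as a sketch, your write-up does not prove the literal statement about $\Gamma'_g$. But you should know that the paper does not close this gap either: its proof explicitly concedes ``however, the coloring is not proper'' and concludes only that the coloring is pseudo-Grundy, i.e., it establishes $\tfrac{n^2}{6}+\Theta(n)\leq\gamma'_g(K_n)$, which is exactly how the result is recorded in Table~\ref{tab:results}. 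Since every Grundy coloring is a pseudo-Grundy coloring, one has $\Gamma'(\mathsf{G})\leq\gamma'(\mathsf{G})$ for every drawing, so a pseudo-Grundy lower bound does not transfer to $\Gamma'_g$; the occurrence of $\Gamma'_g$ in the theorem statement appears to be a misstatement of what is proved. In short: for the result the paper actually proves, your first two paragraphs already constitute essentially the paper's proof; the properness obstruction you identified is genuine, is unresolved in the paper as well, and closing it would require carrying out in full the recycling argument you only outline.
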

\begin{proof}

Let $S$ be a set of $n$ points in general position in the plane. Consider the set $F$ obtained by applying Theorem~\ref{thm:partition} to a complete graph $K_n$. Draw $F$ in $S$ so that there are no crossings between its elements. It is not hard to prove that such a drawing always exists, just order the points in $S$ from left to right, and let $S=\{s_1, \ldots, s_n\}$ be the ordered set. If $F$ is a perfect matching draw the edges $(s_1,s_2), (s_2,s_3), \ldots, (s_{n-1}, s_n)$; if $F$ is a tripole, choose the first three points and draw the $K_{1,3}$ graph, and then draw the perfect matching $(s_4,s_5), (s_6,s_7), \ldots, (s_{n-1}, s_n)$; finally, if $F$ is a $4$-cycle, just draw the edges $(s_1,s_2), (s_2,s_3), (s_3,s_4), (s_4,s_1)$.

Without loss of generality, let $\mathsf{K}_n$ be the complete graph drawn in the plane as described above. Theorem~\ref{thm:partition} guarantees that the set of edges of $\mathsf{K}_n \setminus F$ can be partitioned into $k$ triangles. Color each of these triangles with a different color, using colors $\{1, 2, \ldots, k\}$. 
This partial coloring is pseudo-Grundy, since Lemma~\ref{lem:triangle-edge} implies that for every two triangles in the partition there are always two parallel edges, one from each triangle; however, the coloring is not proper. Now we must color $F$.

Color each edge in $F$ with a different color, using colors $\{k+1, \ldots, k+|F|\}$. By construction, every pair of edges in $F$ is parallel or shares exactly one vertex; therefore, $F$ induces a clique. This implies that the coloring of $F$ is Grundy. Now consider an edge $e$ in $F$ colored with color $j$, and let $T$ be a triangle in the partition, colored with color $i$, $i < j$. There are two possible situations; either $e$ and $T$ do not share any vertex or $e$ is incident to one vertex of $T$. In the first case, Lemma~\ref{lem:triangle-edge} guarantees that there is at least one edge in $T$ parallel to $e$, and its chromatic classes are adjacent. In the latter case, the condition is met by definition. Therefore, the coloring is pseudo-Grundy.

The number of colors used is 

\begin{equation*}
 k + |F|  = 
  \begin{cases}
    \frac{(n-1)n}{6} & \text{if } n\equiv 1, 3 \pmod 6\\
    \frac{(n+1)n}{6} & \text{if } n\equiv 0, 2 \pmod 6\\
     \frac{(n-1)n+16}{6}  & \text{if } n\equiv 4 \pmod 6\\
    \frac{(n+1)n+4}{6} & \text{if } n\equiv 5 \pmod 6
  \end{cases}
\end{equation*}
\end{proof}

\begin{figure}
    \centering
    \includegraphics[width=0.6\linewidth]{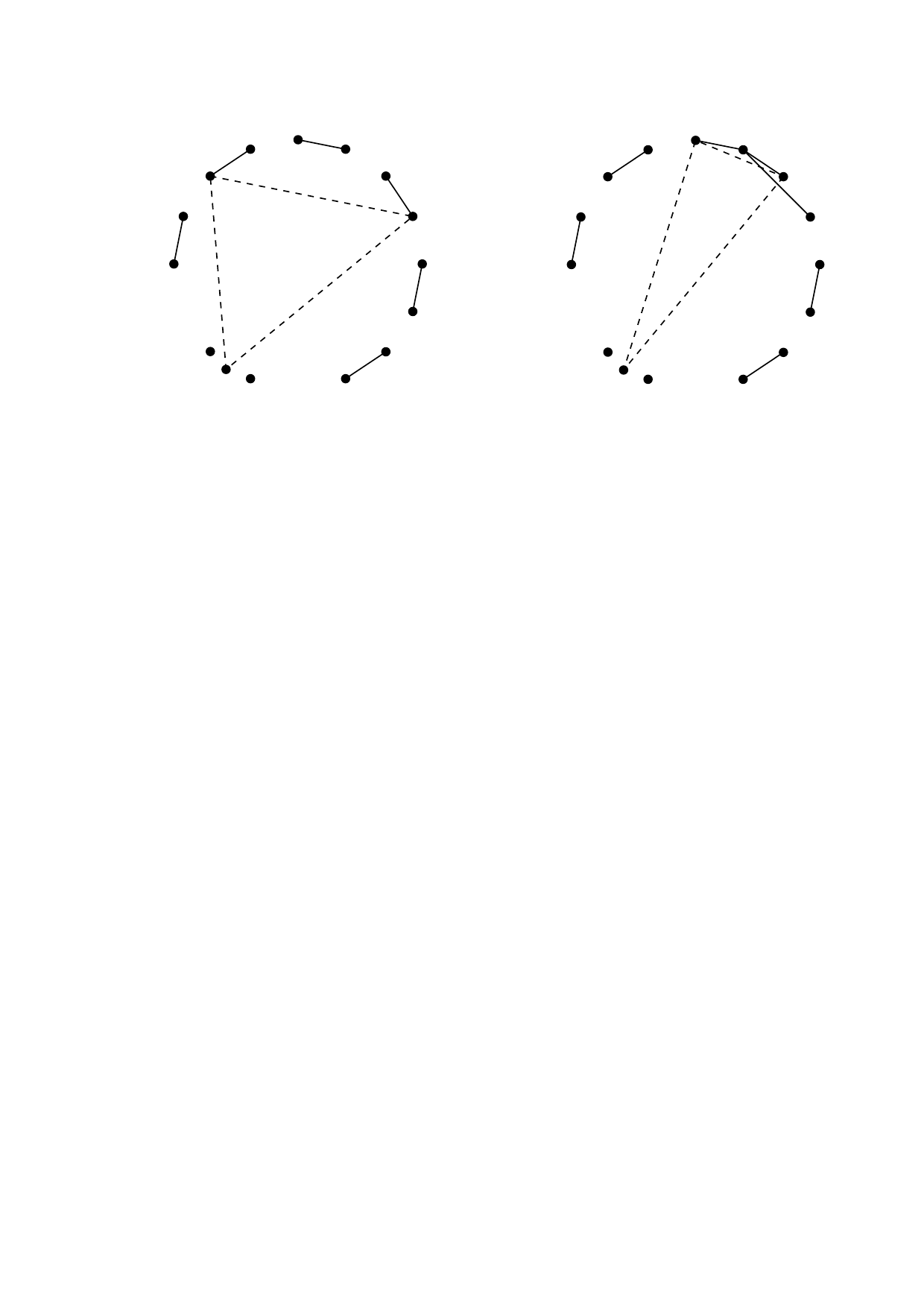}
    \caption{Configurations of the color classes of size one arising from $F$ and a dashed triangle of $K \setminus F$ in the proof of Theorem~\ref{teo:remainder}.}
    \label{fig:remainder}
\end{figure}



\section*{Acknowledgments}

Part of the work was done during the Taller de Geometr{\' i}a, Combinatoria y Algoritmos, held at Campus-Azcapotzalco, Universidad Aut{\' o}noma de Metropolitana, CDMX, Mexico on April 14-16, 2014. Part of the results of this paper was announced at XVI Spanish Meeting on Computational Geometry in Barcelona, Spain on July 1-3, 2015, see \cite{lara2015grundy}.


\bibliographystyle{unsrt}
\bibliography{biblio}

\begin{thebibliography}{10}

\bibitem{MR2155418}
G.~Araujo, A.~Dumitrescu, F.~Hurtado, M.~Noy, and J.~Urrutia.
\newblock On the chromatic number of some geometric type {K}neser graphs.
\newblock {\em Comput. Geom.}, 32(1):59--69, 2005.

\bibitem{MR3862366}
Vida Dujmovi\'c and David~R. Wood.
\newblock Thickness and antithickness of graphs.
\newblock {\em J. Comput. Geom.}, 9(1):356--386, 2018.

\bibitem{MR3446120}
Ruy Fabila-Monroy and David~R. Wood.
\newblock The chromatic number of the convex segment disjointness graph.
\newblock In {\em Computational geometry}, volume 7579 of {\em Lecture Notes in Comput. Sci.}, pages 79--84. Springer, Cham, 2011.

\bibitem{jonsson2011exact}
Jakob Jonsson.
\newblock The exact chromatic number of the convex segment disjointness graph.
\newblock \url{https://people.kth.se/~jakobj/doc/preprints/chrom.pdf}, 2011.

\bibitem{FJVW2018exact}
R.~Fabila-Monroy, J.~Jonsson, P.~Valtr, and D.~R. Wood.
\newblock The exact chromatic number of the convex segment disjointness graph.
\newblock \url{https://arxiv.org/pdf/1804.01057}, 2018.

\bibitem{MR4788271}
Jes\'us~J. Garc\'ia-Davila, Jes\'us Lea\~nos, Mario Lomel\'i-Haro, and Luis~M. R\'ios-Castro.
\newblock The maximum chromatic number of the disjointness graph of segments on {$n$}-point sets in the plane with {$n\le 16$}.
\newblock {\em Bol. Soc. Mat. Mex. (3)}, 30(3):Paper No. 85, 37, 2024.

\bibitem{MR4104109}
Ruy Fabila-Monroy, Carlos Hidalgo-Toscano, Jes\'us Lea\~nos, and Mario Lomel\'i-Haro.
\newblock The chromatic number of the disjointness graph of the double chain.
\newblock {\em Discrete Math. Theor. Comput. Sci.}, 22(1):Paper No. 11, 10, [2020--2021].

\bibitem{MR4272849}
J\'anos Pach, G\'abor Tardos, and G\'eza T\'oth.
\newblock Disjointness graphs of segments in the space.
\newblock {\em Combin. Probab. Comput.}, 30(4):498--512, 2021.

\bibitem{MR4115538}
J\'anos Pach and Istv\'an Tomon.
\newblock On the chromatic number of disjointness graphs of curves.
\newblock {\em J. Combin. Theory Ser. B}, 144:167--190, 2020.

\bibitem{MR3461960}
O.~Aichholzer, G.~Araujo-Pardo, N.~Garc\'ia-Col\'in, T.~Hackl, D.~Lara, C.~Rubio-Montiel, and J.~Urrutia.
\newblock Geometric achromatic and pseudoachromatic indices.
\newblock {\em Graphs Combin.}, 32(2):431--451, 2016.

\bibitem{MR4341191}
Gabriela Araujo-Pardo, Juan~Carlos D\'iaz-Pati\~no, and Christian Rubio-Montiel.
\newblock Achromatic numbers of {K}neser graphs.
\newblock {\em Ars Math. Contemp.}, 21(1):Paper No. 5, 13, 2021.

\bibitem{MR277421}
D.~R. Woodall.
\newblock Thrackles and deadlock.
\newblock In {\em Combinatorial {M}athematics and its {A}pplications ({P}roc. {C}onf., {O}xford, 1969)}, pages 335--347. Academic Press, London-New York, 1971.

\bibitem{MR1289067}
B.~Aronov, P.~Erd\"os, W.~Goddard, D.~J. Kleitman, M.~Klugerman, J.~Pach, and L.~J. Schulman.
\newblock Crossing families.
\newblock {\em Combinatorica}, 14(2):127--134, 1994.

\bibitem{MR2450569}
Gary Chartrand and Ping Zhang.
\newblock {\em Chromatic graph theory}.
\newblock Discrete Mathematics and its Applications (Boca Raton). CRC Press, Boca Raton, FL, 2009.

\bibitem{MR15796}
P.~Erd\"os.
\newblock On sets of distances of {$n$} points.
\newblock {\em Amer. Math. Monthly}, 53:248--250, 1946.

\bibitem{MR1755231}
Mark~J. Nielsen and Dusty~E. Sabo.
\newblock Transverse families of matchings in the plane.
\newblock {\em Ars Combin.}, 55:193--199, 2000.

\bibitem{MR1608874}
I.~B\'ar\'any and P.~Valtr.
\newblock A positive fraction {E}rd\"os-{S}zekeres theorem.
\newblock {\em Discrete Comput. Geom.}, 19(3):335--342, 1998.
\newblock Dedicated to the memory of Paul Erd\"os.

\bibitem{MR3663492}
Bernardo~M. \'Abrego and Silvia Fern\'andez-Merchant.
\newblock The rectilinear local crossing number of {$K_n$}.
\newblock {\em J. Combin. Theory Ser. A}, 151:131--145, 2017.

\bibitem{MR2071903}
Chin-Mei Fu, Hung-Lin Fu, and C.~A. Rodger.
\newblock Decomposing {$K_n\cup P$} into triangles.
\newblock {\em Discrete Math.}, 284(1-3):131--136, 2004.

\bibitem{MR382030}
Haim Hanani.
\newblock Balanced incomplete block designs and related designs.
\newblock {\em Discrete Math.}, 11:255--369, 1975.

\bibitem{lara2015grundy}
D.~Lara, C.~Rubio-Montiel, and F.~Zaragoza.
\newblock Grundy and pseudo-{G}rundy indices for complete graphs.
\newblock In {\em the Abstracts of the XVI Spanish Meeting on Computational Geometry, Barcelona, Spain}, pages 73--76, 2015.
\newblock \url{https://dccg.upc.edu/egc15/es/wp-content/uploads/2013/10/AbstractsXVIEGC.pdf}.

\end{thebibliography}

\end{document}